\documentclass[11pt]{scrartcl}
\usepackage[utf8]{inputenc}
\usepackage{amsmath,amssymb,amsthm}
\usepackage{enumerate}
\usepackage{url}
\newtheorem{thm}{Theorem}[section]
\newtheorem{dfn}[thm]{Definition}
\newtheorem{lem}[thm]{Lemma}
\newtheorem{prop}[thm]{Proposition}
\newtheorem{cor}[thm]{Corollary}

\theoremstyle{remark}
\newtheorem{rem}[thm]{Remark}
\newtheorem{ex}[thm]{Example}
\newtheorem{claim}[thm]{Claim}

\DeclareMathOperator{\scal}{Scal}
\DeclareMathOperator{\ric}{Ric}
\DeclareMathOperator{\R}{R}
\DeclareMathOperator{\Lop}{L}

\DeclareMathOperator{\Wop}{W}

\DeclareMathOperator{\tr}{trace}
\DeclareMathOperator{\Id}{I}
\DeclareMathOperator{\id}{id}

\newcommand{\ddt}[1]{\frac{\partial #1 }{\partial t}}
\newcommand{\ps}[2]{\left\langle #1,#2 \right\rangle}
\newcommand{\eps}{\varepsilon}

\title{Noncoercive Ricci flow invariant curvature cones.}
\author{Thomas Richard and Harish Seshadri}
\begin{document}
\maketitle
\begin{abstract}
  This note is a study of nonnegativity conditions on curvature which are
  preserved by the Ricci flow. We focus on specific kinds of
  curvature conditions which we call noncoercive, these are the
  conditions for which nonnegative curvature and vanishing scalar
  curvature doesn't imply flatness.

  We show that, in dimensions greater
  than $4$, if a Ricci flow invariant
  condition is weaker than ``Einstein with nonnegative scalar
  curvature'', then this condition has to be (if not void) the
  condition ``nonnegative scalar curvature''. As a corollary, we
  obtain that a Ricci flow invariant curvature condition which is
  stronger than ``nonnegative scalar curvature'' cannot be (strictly)
  satisfied by compact Einstein symmetric spaces such as
  $\mathbb{S}^2\times\mathbb{S}^2$ or $\mathbb{CP}^2$.

  We also investigate conditions which are satisfied by all
  conformally flat manifolds with nonnegative scalar curvature.
\end{abstract}
When studying Ricci flow, it is useful to know that some
``nonnegative curvature''-type geometric condition is preserved along
the flow. For instance the proof by Brendle and Schoen of the
differentiable sphere theorem (\cite{MR2449060}) has been made possible by the proof that
the PIC condition is preserved (indepently proved in \cite{MR2449060} and \cite{MR2587576}).

Although Ricci flow has been studied extensively since R. Hamilton's seminal paper, there is still no
comprehensive theory of curvature conditions which are preserved by Ricci flow. A significant advance in this direction is  the work of Wilking (\cite{2010arXiv1011.3561W}) which gives
a unified construction for almost all known Ricci flow invariant
curvature conditions. The paper \cite{2011arXiv1101.5884G}
gives general results on curvature conditions coming from this construction.

We want to gain a better understanding
of general Ricci flow invariant curvature conditions. Curvature conditions are
encoded by convex cones $\mathcal{C}$ (called curvature cones) in the space of
algebraic curvature operators $S^2_B\Lambda^2\mathbb{R}^n$ which are invariant under the natural
action of the orthogonal group. As a consequence of the maximum principle for systems, a sufficient condition for a
curvature condition to be preserved under the Ricci flow is  the preservation of
the cone $\mathcal{C}$ by the flow of some explicit vector field.
Readers not familiar with these notions will find a quick exposition
and references in section \ref{sec:algebr-curv-oper}.

The largest Ricci flow invariant curvature cone is the cone
$\mathcal{C}_{\scal}$ of curvature operators with nonnegative trace, which
geometrically translates to the condition ``nonnegative scalar
curvature''. Other examples of Ricci flow invariant curvature cones
include the cone of operators which are nonnegative (as symmetric
operators on $\Lambda^2\mathbb{R}^n$), the cone of $2$-nonnegative
curvature operators, the cone of curvature operators with positive
isotropic curvature and the cones ``PIC1'' and ``PIC2'' which are
built from it. An overview of the relations between these condtions
and their geometric implications can be found in \cite{MR2583938}

In this paper, we are interested with curvature cones which are
``non-coercive". We say that a curvature cone $\mathcal{C}$ is {\it non-coercive} if it
contains a nontrivial vector subspace. This condition is equivalent to the existence of a
non-vanishing curvature operator $\R$ in $\mathcal{C}$ whose scalar
curvature is zero (See Section 1 for other characterizations of non-coercive cones). Non-coercive Ricci flow invariant cones seem quite rare. In dimension 5 and above, the only known example is the cone $\mathcal{C}_{\scal}$.

For representation theoretic reasons, non-coercive cones fall into four classes :
\begin{enumerate}
\item $\mathcal{C}$ is the full space $S^2_B\Lambda^2\mathbb{R}^n$ of algebraic curvature operators.
\item $\mathcal{C}$ is the cone $\mathcal{C}_{\scal}$ of curvature operators with nonnegative scalar curvature.
\item $\mathcal{C}$ contains all Ricci flat (also known as ``pure Weyl") curvature operators.
\item $\mathcal{C}$ contains all pure traceless Ricci (also known as ``scalar flat and conformally flat") curvature operators.
\end{enumerate}
This is explained in Section 1. 

Our first result shows that there are no Ricci flow invariant cones in the third class, except the cone $\mathcal{C}_{\scal}$. In fact, by using a simple fact from representation theory (Proposition A.5), we have the following stronger statement that the existence of a single nonzero Ricci flat curvature operator is enough to obtain
this conclusion:

\begin{thm}\label{thm:main}
Let $\mathcal{C}\subset S^2_B\Lambda^2\mathbb{R}^n$, $n\geq 4$, be a
  closed, convex, $O(n)$-invariant, Ricci flow invariant curvature cone which
  contains the identity operator in its interior. If $\mathcal{C}\setminus \{0\}$ contains a
Ricci flat metric then $\mathcal{C}$ is either the whole space
  $S^2_B\Lambda^2\mathbb{R}^n$ or the cone $\mathcal{C}_{\scal}$
\end{thm}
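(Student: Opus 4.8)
The plan is to proceed in two conceptual stages: first use representation theory to upgrade the single Ricci-flat operator into the whole Weyl subspace, and then use Ricci flow invariance to pin down the cone modulo that subspace.

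\emph{Stage 1: from one Weyl operator to all of them.} Write $S^2_B\Lambda^2\mathbb{R}^n=\mathbb{R}\,\Id\oplus\mathrm{Ric}_0\oplus\mathcal{W}$, the $O(n)$-irreducible decomposition into scalar, traceless-Ricci and Weyl parts (for $n=4$ one has $\mathcal{W}=\mathcal{W}^+\oplus\mathcal{W}^-$, but these two pieces are interchanged by the orientation-reversing elements of $O(n)$, so $\mathcal{W}$ is still $O(n)$-irreducible). Since $\mathcal{C}$ is closed, convex and $O(n)$-invariant and contains a nonzero Ricci-flat (pure-Weyl) operator, it contains the smallest such cone generated by that operator; by the irreducibility of $\mathcal{W}$ together with Proposition A.5, this forces $\mathcal{W}\subseteq\mathcal{C}$, i.e. $\mathcal{C}$ contains every Ricci-flat operator.

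\emph{Stage 2: reduction modulo $\mathcal{W}$.} A convex cone containing the linear subspace $\mathcal{W}$ satisfies $\mathcal{C}+\mathcal{W}=\mathcal{C}$, so $\mathcal{C}=\pi^{-1}(\bar{\mathcal{C}})$, where $\pi$ is the orthogonal projection onto $U:=\mathbb{R}\,\Id\oplus\mathrm{Ric}_0$ and $\bar{\mathcal{C}}\subset U$ is a closed convex $O(n)$-invariant cone with $\Id$ in its interior. Intersecting with the subspace $\mathrm{Ric}_0$ gives an $O(n)$-invariant convex cone inside an irreducible representation, so by Proposition A.5 it equals either $\mathrm{Ric}_0$ or $\{0\}$. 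If $\bar{\mathcal{C}}\cap\mathrm{Ric}_0=\mathrm{Ric}_0$, then $\bar{\mathcal{C}}$ contains $\mathrm{Ric}_0$ as a subspace and is thus the preimage of a convex cone in the line $\mathbb{R}\,\Id$; since $\Id$ is interior, this cone is $[0,\infty)$ or $\mathbb{R}$, giving $\mathcal{C}=\mathcal{C}_{\scal}$ or $\mathcal{C}=S^2_B\Lambda^2\mathbb{R}^n$. It remains to exclude the \emph{pinched} case $\bar{\mathcal{C}}\cap\mathrm{Ric}_0=\{0\}$.

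\emph{Stage 3: excluding the pinched case.} Here I would invoke the infinitesimal form of Hamilton's maximum principle: $\mathcal{C}$ is Ricci flow invariant iff $\ps{\ell}{Q(\R)}\geq0$ for every $\R\in\partial\mathcal{C}$ and every supporting functional $\ell\in\mathcal{C}^*$ with $\ell(\R)=0$, where $Q(\R)=\R^2+\R^\#$. Because $\mathcal{W}\subseteq\mathcal{C}$, every such $\ell$ is orthogonal to $\mathcal{W}$, so only the $U$-components of $Q$ matter and I may test on representatives $\R=\Id+\rho$ with $\rho\in\mathrm{Ric}_0$. Writing $\rho$ through its underlying traceless symmetric tensor $h$, an $O(n)$-equivariance (Schur) argument shows that the $U$-component of $Q(\Id+\rho)$ has scalar part $A+B|\rho|^2$ with $A,B>0$ (the linear cross-term carries no scalar, since $\mathrm{Ric}_0$ has no trivial subrepresentation), and traceless-Ricci part of the form $\lambda\rho+\Phi(\rho)$, where $\lambda$ is the B\"ohm--Wilking eigenvalue of $DQ_{\Id}$ on $\mathrm{Ric}_0$ and $\Phi(\rho)$ is, up to a positive constant, the traceless-Ricci operator associated with $(h^2)_0$, the trace-free part of $h^2$. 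In the pinched case $\bar{\mathcal{C}}$ carries a supporting functional $\ell_0=b\,\Id+\sigma_0$ with $\sigma_0\neq0$, vanishing on a boundary operator $\Id+\rho_0$; exploiting the freedom in the $O(n)$-orbit of $\rho_0$ and the misalignment of $\Phi(\rho_0)$ with $\rho_0$, one arranges $\ps{\ell_0}{Q(\Id+\rho_0)}<0$, contradicting invariance.

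I expect Stage 3 to be the real obstacle. The representation theory of Stages 1--2 is clean, but excluding the pinched case requires the explicit computation of the reaction term $Q$ on operators $\Id+\rho$, and in particular an understanding of the quadratic self-interaction $\rho\mapsto\pi(\rho^2+\rho^\#)$. The key point to extract is that this self-interaction is governed by the map $h\mapsto(h^2)_0$, which, being built from a square, cannot respect a one-sided ($O(n)$-invariant, pointed) constraint on the traceless-Ricci directions; making the sign bookkeeping of $\lambda$ and $\Phi$ precise --- and checking it uniformly in $n\geq4$, including the split Weyl part when $n=4$ --- is the delicate part.
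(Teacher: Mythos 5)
Your Stages 1 and 2 coincide with the paper's reduction: Proposition \ref{prop_coneInIrredRep} upgrades the single nonzero Ricci-flat operator to $\mathcal{W}\subset\mathcal{C}$, and your dichotomy for $\mathcal{C}\cap(S^2_0\mathbb{R}^n\wedge\id)$ is exactly Claim \ref{claim_notTracelessInC}; when that intersection is all of $S^2_0\mathbb{R}^n\wedge\id$ the conclusion follows as you say. The gap is in Stage 3, and it is not merely that the computation is left undone --- the strategy you sketch tests the invariance inequality $\ps{\ell}{Q(\cdot)}\geq 0$ only at boundary points of the form $\Id+\rho$ with vanishing Weyl component. Write $U=\mathbb{R}\Id\oplus S^2_0\mathbb{R}^n\wedge\id$. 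Since $\ell\perp\mathcal{W}$, what matters is indeed the $U$-component of $Q$, but this component is \emph{not} a function of the $U$-component of the test point: at a general boundary point $\Id+\rho+w$ with $w\in\mathcal{W}$ it contains the cross term $2B(\rho,w)\in S^2_0\mathbb{R}^n\wedge\id$ (Lemma \ref{lem_QandIrreds}). Restricting to $w=0$ discards exactly the information that makes the theorem true. Indeed, the truncated reaction map $\rho\mapsto (Q(\Id+\rho))_{U}$ that you propose to analyze is structurally the same as the three-dimensional reaction ODE (where $\mathcal{W}=0$), and in dimension $3$ pinched invariant cones such as ``nonnegative Ricci curvature'' do exist; so your heuristic that the quadratic self-interaction $h\mapsto (h^2)_0$ ``cannot respect a one-sided constraint'' is not sound, and no contradiction of the form you describe can be forced from $w=0$ alone.

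The paper's proof supplies precisely the missing ingredient. Taking $\R_0$ to be the traceless Ricci part of the curvature operator of $\mathbb{S}^{n-2}\times\mathbb{H}^2$ and $\R=\Id+\tau\R_0$ the last point of that ray inside $\mathcal{C}$, one uses Proposition \ref{prop_boudaryPreservedByAddingV} to slide along the Weyl directions: $\R+t\Wop\in\partial\mathcal{C}$ for all $t$, where $\Wop=Q(\R_0)_{\mathcal{W}}$. Expanding $Q(\R+t\Wop)$, absorbing the $\mathcal{W}$-valued terms into $T_{\R}\mathcal{C}$ and letting $t\to\infty$ isolates $B(\R,\Wop)=\tau B(\R_0,\Wop)\in T_{\R}\mathcal{C}$. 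The computational heart is Lemma \ref{lem_computeBR0W}: $B(\R_0,\Wop)=a\R_0$ with $a>0$, proved by an eigenspace/symmetry argument together with the observation that $Q(\R)$ is the curvature operator of the non-conformally-flat product $\mathbb{S}^{n-2}\times\mathbb{S}^2$, so that $\Wop\neq0$ and $\ps{B(\R_0,\Wop)}{\R_0}=\|\Wop\|^2>0$. This places $\R_0$ itself in $T_{\R}\mathcal{C}$ and contradicts the maximality of $\tau$. To repair your Stage 3 you would need to reintroduce the Weyl component into the test points and establish an analogue of this cross-term computation; as written, the plan cannot close.
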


\begin{rem}
The above theorem allows us to weaken the hypotheses of Theorem 3  in S. Brendle's paper \cite{BrendleEinsteinRFinvCurv} the statement of which is as follows: Let
$\mathcal{C} \subset S^2_B\Lambda^2\mathbb{R}^n$ be a closed, convex, $O(n)$-invariant cone  which is preserved by the Ricci flow. Further assume that the identity operator lies in the interior of $\mathcal{C}$ and that every element of $\mathcal{C} \setminus \{0\}$ has nonnegative scalar curvature and nonzero Ricci tensor. If $(M,g)$ is a compact Einstein $n$-manifold  whose curvature operator at any point lies in the interior of $\mathcal{C}$, then $(M,g)$ has constant sectional curvature.

By Theorem \ref{thm:main} we can drop the assumption that every element of $\mathcal{C}$ has nonzero Ricci tensor and just demand that $\mathcal{C} \neq \mathcal{C}_{\scal}$.

\end{rem}

As another corollary, we obtain a second result which explains the following observation that one can make
about Ricci flow invariant curvature cones:
For most Ricci flow invariant curvature conditions, $\mathbb{CP}^{n/2}$ is
not positively curved, only nonnegatively curved. For instance,
$\mathbb{CP}^{n/2}$ has nonnegative curvature operator and isotropic
curvature but doesn't have positive curvature operator or isotropic
curvature. The corollary below shows that ``nonnegative scalar
curvature'' is the only Ricci flow invariant curvature condition for which
$\mathbb{CP}^{n/2}$ is positively curved.

A symmetric space is said to be nontrivial if it doesn't have constant
sectional curvature.

\begin{cor}\label{cor_coneWithEinsteinSymInInterior}
  Let $\mathcal{C}\subset S^2_B\Lambda^2\mathbb{R}^n$, $n\geq 4$, be a Ricci flow invariant curvature cone
  which contains the curvature operator of a nontrivial Einstein
  symmetric space of non-negative scalar curvature, then $\mathcal{C}$
  is either the whole space $S^2_B\Lambda^2\mathbb{R}^n$ or the cone
  $\mathcal{C}_{\scal}$.
\end{cor}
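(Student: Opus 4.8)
The plan is to deduce the statement from Theorem~\ref{thm:main} by verifying that theorem's hypotheses for $\mathcal{C}$. Since $\mathcal{C}_{\scal}$ is the largest Ricci flow invariant curvature cone apart from the whole space, I would first record the dichotomy: either $\mathcal{C}=S^2_B\Lambda^2\mathbb{R}^n$ (and we are done) or $\mathcal{C}\subseteq\mathcal{C}_{\scal}$, in which case the goal becomes to prove $\mathcal{C}=\mathcal{C}_{\scal}$. Write the curvature operator of the given Einstein symmetric space as $\R=c\,\Id+W$ in the $O(n)$-irreducible decomposition into scalar, traceless-Ricci and Weyl parts. Being Einstein, its traceless-Ricci part vanishes; being nontrivial (not a space form), its Weyl part $W$ is nonzero; and since a Ricci-flat symmetric space is flat, hence trivial, nonnegativity of the scalar curvature forces $c>0$. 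So $\R=c\,\Id+W$ with $c>0$ and $W\neq0$ a pure Weyl operator, and it remains to check the two hypotheses of Theorem~\ref{thm:main}: that $\Id$ lies in the interior of $\mathcal{C}$, and that $\mathcal{C}\setminus\{0\}$ contains a Ricci-flat operator.

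For the interior condition, I would average the orbit $\{g\cdot\R:g\in O(n)\}\subseteq\mathcal{C}$ against Haar measure: this kills the Weyl part (a nontrivial irreducible has no invariant vector) and leaves $c\,\Id$, so $\Id\in\mathcal{C}$. Convex combinations of the orbit give $c\,\Id+V\in\mathcal{C}$ for every $V$ in the convex hull of $O(n)\cdot W$, a body whose centroid is $0$, so $\mathcal{C}$ contains a whole Weyl-neighborhood of $c\,\Id$. To promote this to full dimensionality I would flow these points: the reaction term $Q(\R)=\R^2+\R^{\#}$ couples the Weyl component to the traceless-Ricci component, so applying the forward-invariant ODE to the Weyl-ball around $c\,\Id$ should sweep out the missing traceless-Ricci directions. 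Once $\mathcal{C}$ is full-dimensional, the normal cone at $\Id$ is a nonzero $O(n)$-invariant cone and hence contains a multiple of the trace functional; but no multiple of $\scal$ can be an outer normal at $\Id$ to a cone containing both $0$ and rays of arbitrarily large scalar curvature. Therefore the normal cone at $\Id$ is trivial, i.e. $\Id\in\mathrm{int}\,\mathcal{C}$.

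The crux, and what I expect to be the main obstacle, is producing a nonzero Ricci-flat operator in $\mathcal{C}$. This cannot be done by convexity and symmetry alone: every element of the cone generated by $O(n)\cdot\R$ has scalar curvature exactly $c\,\scal(\Id)>0$, so the pure Weyl subspace meets that sub-cone only at $0$. The extra input must be Ricci flow invariance together with the fact that an Einstein symmetric space is a homothetically shrinking soliton, so that $Q(\R)=\kappa\R$ with $\kappa>0$ and $[\R]$ is a fixed ray of the projectivized flow carrying nonzero Weyl part. I would study the forward flow near this ray: the round direction $[\Id]$ is attracting (the normalized flow rounds positively curved operators), so on the family $c\,\Id+aW$ the flow moves inward for small $a$, which forces $[\R]$ to be repelling on the far side; pushing operators with slightly larger Weyl ratio forward then increases the Weyl-to-scalar ratio without bound, and closedness of $\mathcal{C}$ yields a nonzero scalar-flat operator in $\mathcal{C}$. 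To see that this limit is actually Ricci-flat rather than of mixed type, I would work $K$-equivariantly, $K$ being the isotropy group of the symmetric space: $Q$ preserves $K$-invariance, and for the relevant spaces (e.g. $\mathbb{CP}^{n/2}$) the $K$-invariant operators form the plane $\langle\Id,W\rangle$, on which every scalar-flat operator is automatically a multiple of $W$. Verifying the repelling behaviour of $[\R]$, equivalently that $\mathcal{C}$ genuinely extends past $\R$ in the Weyl direction, and controlling this invariant slice is where the real work lies.

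With both hypotheses in hand, Theorem~\ref{thm:main} gives that $\mathcal{C}$ is either the whole space or $\mathcal{C}_{\scal}$, and combined with the reduction $\mathcal{C}\subseteq\mathcal{C}_{\scal}$ this forces $\mathcal{C}=\mathcal{C}_{\scal}$. As a consistency check for the scheme, note that if $\R$ happened to lie in the interior of $\mathcal{C}$ one could instead invoke the improved form of Brendle's theorem quoted in the Remark, since a compact Einstein manifold with curvature operator in the interior of a Ricci flow invariant cone $\mathcal{C}\neq\mathcal{C}_{\scal}$ must have constant curvature, contradicting nontriviality; the genuine difficulty is precisely the boundary case $\R\in\partial\mathcal{C}$, which the Ricci-flat-operator construction above is designed to handle.
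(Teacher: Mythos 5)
Your skeleton matches the paper's: reduce to Theorem~\ref{thm:main} by producing a nonzero Ricci flat (pure Weyl) operator in $\mathcal{C}$, using the identity $Q(\R)=\lambda\R$, $\lambda>0$, for the Einstein symmetric space. But the one step you flag as ``where the real work lies'' is exactly the step that is missing, and your plan for it would not close the gap. You need a point of $\mathcal{C}$ in the plane $\mathbb{R}\Id\oplus\mathbb{R}W$ whose Weyl-to-scalar ratio strictly exceeds that of $\R=c\Id+W$, and you propose to find it by showing $\mathcal{C}$ extends past $\R$ in the $+W$ direction --- which is precisely what you cannot verify, and which can genuinely fail when $\R\in\partial\mathcal{C}$. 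Indeed, read literally (boundary containment allowed) the corollary is false: in dimension $4$ the PIC cone is a Ricci flow invariant curvature cone containing the curvature operator of $\mathbb{S}^2\times\mathbb{S}^2$ on its boundary, yet it is neither $S^2_B\Lambda^2\mathbb{R}^4$ nor $\mathcal{C}_{\scal}$ and contains no nonzero Weyl operator (a nonzero trace-free $W^{\pm}$ always has two eigenvalues with negative sum). The statement must therefore be read, as in the proposition the paper actually proves, with $\R$ in the \emph{interior} of $\mathcal{C}$; your closing claim that your construction ``is designed to handle'' the boundary case is backwards --- that case is a counterexample, not a difficulty. Once the interior hypothesis is used, the seed point $\bar\R=\R-\eps\Id\in\mathcal{C}$ is free, and the rest needs none of your machinery: since $B(\Id,W)=0$ (Lemma~\ref{lem_QandIrreds}) and $Q(W)=W$ after normalising $Q(\R)=\R$, the plane $\mathbb{R}\Id\oplus\mathbb{R}W$ is exactly invariant under $\frac{d}{dt}\R=Q(\R)$, the two components solve decoupled Riccati equations with the Weyl component blowing up strictly first, and rescaling plus closedness gives $W\in\mathcal{C}$. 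No traceless Ricci component is ever created, so your worry about a ``mixed type'' limit, and the isotropy-group slice argument meant to address it (which you in any case only justify for $\mathbb{CP}^{n/2}$, not for a general Einstein symmetric space), do not arise.

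Two further remarks. Your entire second paragraph is superfluous: ``curvature cone'' is a defined term in the paper that already includes $\Id\in\mathrm{int}\,\mathcal{C}$; and the substitute argument you sketch is not a proof --- flowing a Weyl ball around $c\Id$ ``should sweep out'' the traceless Ricci directions is unsubstantiated, and the normal-cone step overlooks that the $O(n)$-average of a nonzero normal vector may vanish, so a nonzero invariant normal cone need not contain a nonzero multiple of the trace functional. On the positive side, your preliminary reductions ($W\neq0$ since the space is not a space form, $c>0$ and $\lambda>0$ since a Ricci flat symmetric space is flat, $Q(\R)=\lambda\R$ from parallelism of the curvature) are correct and coincide with the paper's Lemma 4.1.
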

\begin{rem}
As a byproduct, we obtain that in any even dimension $n$, the cone $\mathcal{C}_{\scal}$ is the only cone which contains all curvature operators with nonnegative sectional curvature, because the curvature operator of $\mathbb{CP}^{n/2}$ must be in the interior of such a cone since it has positive sectional curvature.
\end{rem}

Next, we study Ricci flow invariant cones which fall into the fourth class, that is cones which contain all conformally flat scalar flat curvature operators. It turns out that the situation there is more complicated. At least in dimension 4, there are Ricci flow invariant cones which contain all conformally flat scalar flat curvature operators, the first example is the cone of curvature operators whose isotropic curvature is nonnegative, (see \cite{MR924677}). However, the PIC cone doesn't contain all conformally flat scalar flat curvature operators in dimension 5 and above. Our first result about cones in the fourth class is that they are quite common in dimension 4:
\begin{thm}\label{thm_ex_dim4}
Let $\tilde{\mathcal{C}}\subset S^2_B\Lambda^2\mathbb{R}^4$ be any Ricci flow invariant cone which
contains all nonnegative curvature operators and  let :
\[\mathcal{C}=\left\{\R\left|\frac{\scal}{12}+W\in\tilde{\mathcal{C}}\right .\right\}\subset S^2_B\Lambda^2\mathbb{R}^4\]
where $\scal$ and $W$ are the scalar curvature and the Weyl curvature components of $\R$.

  Then $\mathcal{C}$ is a Ricci flow invariant cone which contains all conformally flat scalar flat curvature operators.
\end{thm}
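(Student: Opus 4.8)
The plan is to realize $\mathcal{C}$ as the preimage $P^{-1}(\tilde{\mathcal{C}})$ under the orthogonal projection $P\colon S^2_B\Lambda^2\mathbb{R}^4\to S^2_B\Lambda^2\mathbb{R}^4$, $P(\R)=\tfrac{\scal}{12}\Id+W$, which discards the traceless Ricci component and keeps the scalar and Weyl parts. Since $P$ is linear and $O(4)$-equivariant (the decomposition into scalar, traceless-Ricci and Weyl pieces is $O(4)$-invariant), the set $\mathcal{C}=P^{-1}(\tilde{\mathcal{C}})$ is automatically a closed convex $O(4)$-invariant cone. Moreover a conformally flat scalar flat operator has $\scal=0$ and $W=0$, hence $P(\R)=0\in\tilde{\mathcal{C}}$, so $\mathcal{C}$ contains all such operators. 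Thus the only real content is Ricci flow invariance.

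The key is to understand $P$ in the self-dual/anti-self-dual splitting $\Lambda^2\mathbb{R}^4=\Lambda^2_+\oplus\Lambda^2_-$. Writing $\R=\bigl(\begin{smallmatrix}A&B\\ B^t&C\end{smallmatrix}\bigr)$ with $A,C$ symmetric on $\Lambda^2_\pm$ and $B\colon\Lambda^2_-\to\Lambda^2_+$, the scalar and Weyl parts are exactly the diagonal blocks $A,C$, while the traceless Ricci part is the off-diagonal block $B$; hence $P$ is block-diagonalization, $P(\R)=\bigl(\begin{smallmatrix}A&0\\ 0&C\end{smallmatrix}\bigr)=:S$. I would then compute the action of the quadratic vector field $Q(\R)=\R^2+\R^\#$ on these blocks. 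For $\R^2$ this is immediate: its diagonal blocks are $A^2+BB^t$ and $C^2+B^tB$. The crucial computation is for $\R^\#$: because in dimension $4$ one has the Lie algebra splitting $\mathfrak{so}(4)=\mathfrak{so}(3)\oplus\mathfrak{so}(3)$ with $[\Lambda^2_+,\Lambda^2_-]=0$, the structure constants never mix the two factors, so the $(++)$ and $(--)$ blocks of $\R^\#$ depend only on $A$ and on $C$ respectively, while the off-diagonal block absorbs all the $B$-dependence. Consequently the diagonal blocks of $\R^\#$ coincide with those of $S^\#$, and since $S$ is block diagonal $S^\#$ is too. Combining these, and using that $Q(\R)$ and $Q(S)$ lie in $S^2_B\Lambda^2\mathbb{R}^4$ (so that block-diagonalization agrees with $P$ on them), I obtain the identity $P(Q(\R))=Q(S)+E$, where $E=\bigl(\begin{smallmatrix}BB^t&0\\ 0&B^tB\end{smallmatrix}\bigr)$ is positive semidefinite.

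With this identity in hand, invariance follows from the maximum principle. Along a solution of $\R'=Q(\R)$ the projected operator $S(t)=P(\R(t))$ satisfies the nonautonomous ODE $S'=Q(S)+E(t)$ with $E(t)$ positive semidefinite, and I want $S(t)\in\tilde{\mathcal{C}}$. Here I use the hypothesis that $\tilde{\mathcal{C}}$ contains all nonnegative operators: since the cone of positive semidefinite operators is self-dual, every inward normal $\langle N,\cdot\rangle$ of $\tilde{\mathcal{C}}$ is represented by a positive semidefinite $N$. At a first contact point $S(t_0)\in\partial\tilde{\mathcal{C}}$ with inward normal $N$, Hamilton's ODE maximum principle requires $\langle N,S'\rangle\geq0$; we have $\langle N,Q(S)\rangle\geq0$ by the assumed Ricci flow invariance of $\tilde{\mathcal{C}}$, and $\langle N,E(t_0)\rangle=\tr(NE(t_0))\geq0$ because $N$ and $E(t_0)$ are both positive semidefinite. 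Hence $S(t)$ cannot leave $\tilde{\mathcal{C}}$, that is $\R(t)\in\mathcal{C}$.

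The main obstacle is the block computation of $\R^\#$: one must verify carefully that the diagonal blocks of the Lie-algebra square decouple from $B$ and reproduce $S^\#$, which is precisely where the special structure of dimension $4$ enters. The convex-geometry part is then routine, the only point to check being that $\tilde{\mathcal{C}}$ containing all nonnegative operators forces its normals to be nonnegative, so that the nonnegative forcing term $E$ never pushes the solution out of $\tilde{\mathcal{C}}$.
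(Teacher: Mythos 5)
Your proof is correct, and while its overall architecture matches the paper's (both reduce the problem to showing that the scalar-plus-Weyl projection of $Q(\R)$ equals $Q$ of the projection plus a nonnegative operator, which then lands in $T_{P(\R)}\tilde{\mathcal{C}}$ because $\tilde{\mathcal{C}}$ contains all nonnegative operators), the key computation is done by a genuinely different route. The paper expands $Q(\R)$ along the irreducible decomposition using the B\"ohm--Wilking rules for $B$ on the summands, identifies the error term as $Q(\R_0)_{\Id}+Q(\R_0)_{\mathcal{W}}=\tfrac12\ric_0\wedge\ric_0+\tfrac12\ric_0^2\wedge\id$ via their explicit formula specialized to $n=4$, and checks nonnegativity by computing the eigenvalues $\tfrac12(\lambda_i+\lambda_j)^2$. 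You instead use the Singer--Thorpe block form in the splitting $\Lambda^2=\Lambda^2_+\oplus\Lambda^2_-$ and the decoupling of $\#$ coming from $[\Lambda^2_+,\Lambda^2_-]=0$, which identifies the error term as $\operatorname{diag}(BB^t,B^tB)$ — manifestly positive semidefinite with no eigenvalue computation. (The two error terms agree, as they must.) Your approach makes the dimension-4 mechanism more transparent and avoids quoting the B\"ohm--Wilking formula, at the cost of having to justify carefully that block-diagonalization of $Q(\R)$ agrees with the projection $P$ — which holds because $Q$ preserves $S^2_B\Lambda^2\mathbb{R}^4$ and an algebraic curvature operator has $\tr A=\tr C$, a point worth stating explicitly. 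Your closing duality argument (normals of $\tilde{\mathcal{C}}$ are PSD since $\tilde{\mathcal{C}}$ contains the self-dual PSD cone) is valid but slightly more roundabout than needed: since $E\in\tilde{\mathcal{C}}\subset T_{S}\tilde{\mathcal{C}}$ and $Q(S)\in T_{S}\tilde{\mathcal{C}}$ with $T_{S}\tilde{\mathcal{C}}$ a convex cone, the sum is immediately in $T_{S}\tilde{\mathcal{C}}$, which is exactly the required invariance condition without passing through the ODE.
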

It turns out that some of these cones can be recovered by Wilking's
construction (\cite{2010arXiv1011.3561W}) while some others are
genuinely new, see Remark \ref{sec:rem_wilk_cones}.

As mentioned earlier, in dimension 5 and above, there is not a single known Ricci flow invariant cone which contain all conformally flat scalar flat curvature operator. We prove the following restriction on such a cone:
\begin{thm}\label{thm_cone_with_cfsf}
Let $\mathcal{C}$ be a Ricci flow invariant cone which contains a nonzero conformally flat scalar flat curvature operator and all nonnegative curvature operators, then $\mathcal{C}=\mathcal{C}_{\scal}$.
\end{thm}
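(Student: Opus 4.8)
The plan is to pass to the dual cone $\mathcal{C}^{*}=\{A:\ps{A}{\R}\ge0\ \text{for all}\ \R\in\mathcal{C}\}$ and reduce the statement to the assertion that $\mathcal{C}^{*}$ is the single ray $\mathbb{R}_{\ge0}\,\Id$. Recall the orthogonal $O(n)$-irreducible splitting $S^2_B\Lambda^2\mathbb{R}^n=\langle\Id\rangle\oplus\mathcal{R}_0\oplus\mathcal{W}$ into the scalar, traceless Ricci, and Weyl parts; since $n\ge5$ all three summands are irreducible, with $\mathcal{R}_0$ and $\mathcal{W}$ nontrivial. A nonzero conformally flat scalar flat operator is precisely a nonzero element of $\mathcal{R}_0$, so $\mathcal{C}\cap\mathcal{R}_0$ is a nonzero closed convex $O(n)$-invariant cone inside the irreducible module $\mathcal{R}_0$; by Proposition A.5 this forces $\mathcal{R}_0\subseteq\mathcal{C}$. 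Writing $\mathcal{P}$ for the (self-dual) cone of nonnegative curvature operators, the hypothesis $\mathcal{P}\subseteq\mathcal{C}$ together with $\mathcal{R}_0\subseteq\mathcal{C}$ dualizes to $\mathcal{C}^{*}\subseteq\mathcal{P}^{*}\cap\mathcal{R}_0^{\perp}=\mathcal{P}\cap(\langle\Id\rangle\oplus\mathcal{W})$, so every $A\in\mathcal{C}^{*}$ is nonnegative and of the form $A=a\,\Id+W_A$ with $W_A\in\mathcal{W}$. Since $(\mathbb{R}_{\ge0}\,\Id)^{*}=\mathcal{C}_{\scal}$ and $\mathcal{C}=\mathcal{C}^{**}$, it suffices to prove that $W_A=0$ for every $A\in\mathcal{C}^{*}$; the degenerate case $\mathcal{C}^{*}=\{0\}$ corresponds to $\mathcal{C}$ being the whole space, i.e. the void condition, which we exclude.

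Next I would exploit Ricci flow invariance through the subspace $\mathcal{R}_0$. Fix a nonzero $A\in\mathcal{C}^{*}$; it supports $\mathcal{C}$ at some boundary point $\R^{*}$, so $\ps{A}{\R^{*}}=0$. Because $\mathcal{R}_0\subseteq\mathcal{C}$ is a \emph{linear} subspace, for every $\rho\in\mathcal{R}_0$ the entire affine line $\R^{*}+s\rho$ ($s\in\mathbb{R}$) lies in $\mathcal{C}$ and satisfies $\ps{A}{\R^{*}+s\rho}=0$; hence $A$ supports $\mathcal{C}$ at each $\R^{*}+s\rho\in\partial\mathcal{C}$. Hamilton's tangency criterion for preservation of $\mathcal{C}$ under the reaction ODE $\dot{\R}=Q(\R)$, $Q(\R)=\R^{2}+\R^{\#}$, then yields $\ps{A}{Q(\R^{*}+s\rho)}\ge0$ for all $s\in\mathbb{R}$. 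The left-hand side is a quadratic polynomial in $s$ that is nonnegative on all of $\mathbb{R}$, so its linear coefficient must vanish, giving the key identity $\ps{A}{DQ_{\R^{*}}(\rho)}=0$ for every $\rho\in\mathcal{R}_0$, where $DQ_{\R^{*}}$ is the differential of $Q$ at $\R^{*}$.

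The heart of the argument, and the step I expect to be the main obstacle, is to deduce $W_A=0$ from this identity. Here one must use the explicit polarization $DQ_{\R^{*}}(\rho)=\R^{*}\rho+\rho\,\R^{*}+2\,\R^{*}\#\rho$ and the coupling that the Lie-algebra square $\#$ induces between the modules $\mathcal{R}_0$ and $\mathcal{W}$. The aim is to show that, as $\rho$ ranges over $\mathcal{R}_0$, the Weyl components of $DQ_{\R^{*}}(\rho)$ span enough of $\mathcal{W}$ that the constraint $A\perp DQ_{\R^{*}}(\mathcal{R}_0)$, combined with $A=a\,\Id+W_A\ge0$, forces $W_A=0$. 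It is exactly here that $n\ge5$ enters: the irreducibility of $\mathcal{W}$ rules out the splitting $\mathcal{W}=\mathcal{W}^{+}\oplus\mathcal{W}^{-}$ that in dimension $4$ permits the extra invariant directions responsible for the cones of Theorem~\ref{thm_ex_dim4}. Once $W_A=0$ is established for all $A\in\mathcal{C}^{*}$, we obtain $\mathcal{C}^{*}=\mathbb{R}_{\ge0}\,\Id$ and hence $\mathcal{C}=(\mathcal{C}^{*})^{*}=\mathcal{C}_{\scal}$, as claimed.
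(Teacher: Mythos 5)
Your reduction steps are fine and agree with the paper's: Proposition \ref{prop_coneInIrredRep} upgrades the single nonzero conformally flat scalar flat operator to $S^2_0\mathbb{R}^n\wedge\id\subset\mathcal{C}$, and dualizing confines $\mathcal{C}^*$ to $\bigl(\mathbb{R}\Id\oplus\mathcal{W}\bigr)$. After that, however, there are two genuine gaps. The first is the ``key identity'' $\ps{A}{DQ_{\R^*}(\rho)}=0$: the function $s\mapsto\ps{A}{Q(\R^*+s\rho)}$ is a quadratic $c+bs+as^2$ that is nonnegative on all of $\mathbb{R}$, and this yields only $a\geq0$, $c\geq0$ and $b^2\leq 4ac$ --- not $b=0$. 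To kill the linear coefficient you would need the constant term $\ps{A}{Q(\R^*)}$ or the leading term $\ps{A}{Q(\rho)}$ to vanish, and neither does: for $\rho\in S^2_0\mathbb{R}^n\wedge\id$ the operator $Q(\rho)_{\Id}+Q(\rho)_{\mathcal{W}}$ is essentially $\frac{1}{2}\ric_0\wedge\ric_0+\frac{1}{2}\ric_0^2\wedge\id$ (as computed in the proof of Theorem \ref{thm_ex_dim4}), which is nonnegative with strictly positive trace, so $\ps{A}{Q(\rho)}$ is in general positive. (A secondary issue: the dual of the cone of nonnegative curvature operators \emph{inside} $S^2_B\Lambda^2\mathbb{R}^n$ is the projection of the positive semidefinite cone onto $S^2_B\Lambda^2\mathbb{R}^n$, which is larger than the nonnegative curvature operators, so ``$A\geq0$'' is also not available.) The second gap is that, even granting the identity, the decisive step --- showing that the Weyl parts of $B(\R^*,\rho)$ sweep out enough of $\mathcal{W}$ to force $W_A=0$ --- is only announced as ``the aim''; this is exactly where all the work of the theorem lies, and it is not done.

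For comparison, the paper sidesteps any such computation in the dual: since $S^2_0\mathbb{R}^n\wedge\id\subset\mathcal{C}$ and $\Id\in\operatorname{int}\mathcal{C}$, the curvature operator of $\mathbb{S}^{n-2}\times\mathbb{H}^2$ (conformally flat, positive scalar curvature) lies in the interior of $\mathcal{C}$; adding $(n-3)$ times the nonnegative curvature operator of $\mathbb{R}^{n-2}\times\mathbb{S}^2$ produces the curvature operator of the Einstein symmetric space $\mathbb{S}^{n-2}\times\mathbb{S}^2$ in the interior, and Corollary \ref{cor_coneWithEinsteinSymInInterior} concludes. The one hard algebraic fact in that chain is Lemma \ref{lem_computeBR0W}, $B(\R_0,\Wop)=a\R_0$ with $a>0$, used inside Theorem \ref{thm:main}; to complete your dual argument you would in effect have to reprove something of that strength, so I would recommend either carrying out that computation explicitly or switching to the symmetric-space construction.
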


The paper is organised as follows. First, we give a few
definitions that give us an abstract framework to talk about
curvature conditions, and which enable us to precisely define what a
``Ricci flow invariant curvature condition'' is.  In a second section, we prove some elementary propositions about non-coercive curvatures cones. The third section is devoted to the
proof of Theorem \ref{thm:main} and the fourth to the proof of
Corollary \ref{cor_coneWithEinsteinSymInInterior}. We treat the case of cones which contain all conformally flat scalar flat curvature operators in the fifth section. In an appendix,
we gather some elementary facts about convex cones which are invariant
under the action of a Lie group.

\section{Algebraic curvature operators, curvature cones and the Ricci flow }
\label{sec:algebr-curv-oper}

\begin{dfn}
  The space of algebraic curvature operators
  $S^2_B\Lambda^2\mathbb{R}^n$ is the space of symmetric endomorphisms
  $\R$ of $\Lambda^2\mathbb{R}^n$ which satisfy the first Bianchi identity:
  \[\forall x,y,z,t\in\mathbb{R}^n\quad \ps{\R(x\wedge y)}{z\wedge t}
  +\ps{\R(z\wedge x)}{y\wedge t}+\ps{\R(y\wedge z)}{x\wedge t}=0.\]
\end{dfn}
\begin{rem}
  Here, as in the rest of the paper, $\mathbb{R}^n$ is endowed with
  its standard euclidean structure, and the scalar product on $\Lambda^2\mathbb{R}^n$ is
  the one which comes from the standard one on $\mathbb{R}^n$ by the
  following construction:
  \[\ps{x\wedge y}{z\wedge t}=\ps{x}{z}\ps{y}{t}-\ps{x}{t}\ps{y}{z}.\]
  The same remark will hold when we will be considering spaces like
  $\Lambda^2TM$ where $(M,g)$ is a Riemannian manifold. $\Lambda^2TM$
  will be equipped with the euclidean structure
  coming from the euclidean structure on $TM$ given by the Riemannian metric.
\end{rem}

The space of algebraic curvature operators is the space of (pointwise)
tensors which satisfy the same symmetries as the Riemann curvature
tensor of Riemannian manifold. As in the case of Riemannian manifold,
it is interesting to consider the Ricci morphism:
$\rho:S^2_B\Lambda^2\mathbb{R}^n\to S^2\mathbb{R}^n$ which associates
to an algebraic curvature operator $\R$ its Ricci tensor which is a
symmetric operator on $\mathbb{R}^n$ defined by:
\[\ps{\rho(\R)x}{y}=\sum_{i=1}^n \ps{\R(x\wedge e_i)}{y\wedge e_i}\]
where $(e_i)_{1\leq i\leq n}$ is an orthonormal basis of
$\mathbb{R}^n$. $\R$ is said to be Einstein if $\rho(\R)$ is a
multiple of the identity operator $\id:\mathbb{R}^n\to\mathbb{R}^n$. Similarly, the scalar curvature of an algebraic
curvature operator is just twice its trace.

The action of $O(n,\mathbb{R})$ on $\mathbb{R}^n$ induces the following action of $O(n,\mathbb{R})$ on $S^2_B\Lambda^2\mathbb{R}^n$:
\begin{equation}
\ps{g.\R(x\wedge y)}{z\wedge t}=\ps{\R(gx\wedge gy)}{gz\wedge gt}.\label{eq:action}
\end{equation}

Recall that the representation of $O(n,\mathbb{R})$ given by its
action on $S^2_B\Lambda^2\mathbb{R}^n$ is decomposed into irreducible
representations in the following way:
\begin{equation}
S^2_B\Lambda^2\mathbb{R}^n=\mathbb{R}\Id\oplus
(S^2_0\mathbb{R}^n\wedge\id)\oplus \mathcal{W}\label{eq:decomp}
\end{equation}
where the space of Weyl curvature operators $\mathcal{W}$ is the
kernel of the Ricci endomorphism $\rho:S^2_B\Lambda^2\mathbb{R}^n\to
S^2\mathbb{R}^n$ and $S^2_0\mathbb{R}^n\wedge\id$ is the image of the space of traceless
endomorphims of $\mathbb{R}^n$ under the application $A_0\mapsto
A_0\wedge\id$. The wedge product of two symmetric operators
$A,B:\mathbb{R}^n\to \mathbb{R}^n$ is defined by :
\[(A\wedge B)(x\wedge y)=\frac{1}{2}\left ( Ax\wedge By+Bx\wedge
  Ay\right).\]
This corresponds to the half of the Kulkarni-Nomizu product of $A$ and
$B$ viewed as quadratic forms. 
In dimension 2, only the first summand of
\eqref{eq:decomp} exists. In dimension $3$ the $\mathcal{W}$
factor is $0$. Starting in dimension 4, all three components exist.

When needed, we will write $\R=\R_{\Id}+\R_0+\R_\mathcal{W}$ the
decomposition of a curvature operator along the three irreducible
components of \eqref{eq:decomp}.

\begin{dfn}
  A (nonnegative) curvature cone is a closed convex cone $\mathcal{C}\subset S^2_B\Lambda^2\mathbb{R}^n$ such that:
  \begin{itemize}
  \item $\mathcal{C}$ is invariant under the action of
    $O(n,\mathbb{R})$ given by \eqref{eq:action}.
  \item The identity operator
    $\Id:\Lambda^2\mathbb{R}^n\to\Lambda^2\mathbb{R}^n$ is in the
    interior of $\mathcal{C}$.
  \end{itemize}
\end{dfn}
\begin{rem}
  The condition that $\Id$ is in the interior of $\mathcal{C}$ implies
  that $\mathcal{C}$ has full dimension.
\end{rem}

This definition can be tracked back to the article \cite{MR1297501} of
M. Gromov. One should notice that we require the cone to be invariant
under the full orthogonal group $O(n,\mathbb{R})$, rather than under
the special orthogonal group $SO(n,\mathbb{R})$. For the result we
prove in this paper, this makes a difference only in dimension 4, where the action of $SO(4,\mathbb{R})$ on the space of Weyl tensors is not irreducible. The behavior of these ``oriented" curvature cones will be addressed in another paper.

Each of these cones defines a nonnegativity condition for the curvature
of Riemannian manifold in the following way: the curvature operator
$\R$ of a Riemannian manifold $(M,g)$ is a section of the bundle $S^2_B\Lambda^2TM$
which is built from $TM$  the same way $S^2_B\Lambda^2\mathbb{R}^n$ is
built from $\mathbb{R}^n$. For each $x\in M$, one can choose a
orthonormal basis of $T_xM$ to build an isomorphism between
$S^2_B\Lambda^2T_xM$ and $S^2_B\Lambda^2\mathbb{R}^n$. Thanks to the
$O(n,\mathbb{R})$-invariance of $\mathcal{C}$, this allows us to embed
$\mathcal{C}$ in $S^2_B\Lambda^2T_xM$ in a way which is independent
of the basis of $T_xM$ we started with.

We then say that $(M,g)$ has
$\mathcal{C}$-nonnegative curvature if for any $x\in M$ the curvature
operator of $(M,g)$ at $x$ belongs to the previously discussed
embedding of $\mathcal{C}$ in $S^2_B\Lambda^2T_xM$. Similarly, $(M,g)$
is said to have positive $\mathcal{C}$-curvature if its curvature
operator at each point is in the interior of $\mathcal{C}$. By
definition, the sphere $\mathbb{S}^n$ has positive
$\mathcal{C}$-curvature for all curvature cones $\mathcal{C}$.

This setting captures all the usual nonnegativity conditions which are
studied in Riemannian geometry, such as nonnegative scalar curvature,
nonnegative Ricci curvature, nonnegative sectional curvature and
nonnegative curvature operator. For instance, the cone which gives rise to the
nonnegative scalar curvature condition is just the half space of
$S^2_B\Lambda^2\mathbb{R}^n$ given by $\{\R\in S^2_B\Lambda^2\mathbb{R}^n|\tr(\R)\geq0\}$.

We now consider the interplay between these curvature cones and the
Ricci flow. If $(M,g(t))$ is a Ricci flow, Hamilton has proved in \cite{MR862046} that
the curvature operator $\R_{g(t)}$ of $(M,g(t))$ satisfies the
following evolution equation:
\[\ddt{\R_{g(t)}}=\Delta_{g(t)}\R_{g(t)}+2Q(\R_{g(t)})\]
where $Q$ is the $O(n,\mathbb{R})$ quadratic vector field on
$S^2_B\Lambda^2\mathbb{R}^n$ defined by:
\[Q(\R)=\R^2+\R^\#.\]
Here, $\R^2$ is just the square of $\R$ seen as an endomorphism of
$\Lambda^2\mathbb{R}^n$.  $\R^\#$ is defined in the following way:
\[\ps{\R^\#\eta}{\eta}=-\frac{1}{2}\sum_{i=1}^{n(n-1)/2}\ps{\left[\eta,\R
    \Bigl (\left[\eta,\R \left
          (\omega_i\right)\right]\Bigr)\right]}{\omega_i}\]
where $(\omega_i)_{i=1\dots n(n-1)/2}$ is an orthonormal basis of
$\Lambda^2\mathbb{R}^n$ and the Lie bracket $[\ ,\ ]$ on $\Lambda^2\mathbb{R}^n$ comes from its
identification with $\mathfrak{so}(n,\mathbb{R})$ given by:
\[x\wedge y\mapsto (u\mapsto \ps{x}{u}y-\ps{y}{u}x).\]
This expression for $\R^\#$ can be found in \cite{MR2415394}.

We will sometimes use the bilinear map $B$ associated to the quadratic map
$Q$, it is defined in the usual way :
\[B(\R_1,\R_2)=\frac{1}{2}\bigl(Q(\R_1+\R_2)-Q(\R_1)-Q(\R_2)\bigr). \]

We are now ready to define what a Ricci flow invariant curvature cone
is.

\begin{dfn}
  A curvature cone $\mathcal{C}$ is said to be Ricci flow invariant if for any
  $\R$ in the boundary $\partial\mathcal{C}$ of $\mathcal{C}$,
  $Q(\R)\in T_{\R}\mathcal{C}$, the tangent cone at $\R$ to $\mathcal{C}$.
\end{dfn}
\begin{rem}
  This condition is equivalent to the fact that the solutions to the ODE
  $\frac{d}{dt}\R=Q(\R)$ which start inside $\mathcal{C}$ stay in
  $\mathcal{C}$ for positive times.
\end{rem}

Hamilton's maximum principle (see \cite{MR862046}) implies :
\begin{thm}
  Let $\mathcal{C}$ be a Ricci flow invariant curvature cone.
  If $(M,g(t))_{t\in[0,T)}$ is a Ricci flow on a compact manifold such
  that $(M,g(0))$ has $\mathcal{C}$-nonnegative curvature, then for
  $t\in[0,T)$, $(M,g(t))$ has $\mathcal{C}$-nonnegative curvature.
\end{thm}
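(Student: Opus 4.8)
The plan is to deduce this from Hamilton's maximum principle for systems (the vector-bundle version of the parabolic maximum principle), applied to the reaction-diffusion equation $\ddt{\R_{g(t)}}=\Delta_{g(t)}\R_{g(t)}+2Q(\R_{g(t)})$ satisfied by the curvature operator. That principle asserts: if $\mathcal{K}$ is a subset of the relevant bundle over $(M,g(t))$ which is (i) fiberwise closed and convex, (ii) invariant under parallel transport for the connection defining $\Delta_{g(t)}$, and (iii) invariant under the ODE given by the reaction term, then any solution of the PDE whose initial datum takes values in $\mathcal{K}$ keeps its values in $\mathcal{K}$ at all later times. So the whole proof reduces to producing the appropriate fiberwise set and checking these three hypotheses.

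First I would build the fiberwise set. For each $x\in M$, the $O(n,\mathbb{R})$-invariance of $\mathcal{C}$ lets us embed $\mathcal{C}$ into $S^2_B\Lambda^2T_xM$ independently of the chosen orthonormal frame of $T_xM$, exactly as explained in Section \ref{sec:algebr-curv-oper}; denote the resulting subset by $\hat{\mathcal{C}}\subset S^2_B\Lambda^2TM$. Hypothesis (i) is immediate, since $\mathcal{C}$ is closed and convex and these properties are preserved under the frame isomorphisms. For hypothesis (ii), I would observe that parallel transport along any path is an isometry between tangent spaces; in orthonormal frames it is realized by an element of $O(n,\mathbb{R})$ acting on $S^2_B\Lambda^2$ through \eqref{eq:action}, and since $\mathcal{C}$ is $O(n,\mathbb{R})$-invariant, the set $\hat{\mathcal{C}}$ is preserved by parallel transport.

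It remains to check hypothesis (iii), invariance under the reaction ODE. The reaction term here is $2Q$, so the relevant ODE is $\frac{d}{dt}\R=2Q(\R)$. By the Remark following the definition of Ricci flow invariance, the hypothesis that $\mathcal{C}$ is Ricci flow invariant is precisely the statement that solutions of $\frac{d}{dt}\R=Q(\R)$ starting in $\mathcal{C}$ remain in $\mathcal{C}$ for positive times; rescaling the time variable by the constant factor $2$ (which only reparametrizes the trajectories) shows the same holds for $\frac{d}{dt}\R=2Q(\R)$. Thus all three hypotheses hold, and Hamilton's principle yields that $\R_{g(t)}$ takes values in $\hat{\mathcal{C}}$ for every $t\in[0,T)$, which is exactly the assertion that $(M,g(t))$ has $\mathcal{C}$-nonnegative curvature.

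The genuinely hard part is the maximum principle for systems itself, which we invoke as a black box from \cite{MR862046}. Its proof is an avoidance argument: one replaces $\mathcal{K}$ by a slightly shrunken family that is \emph{strictly} invariant under the ODE, then considers the first time $t_0$ and point $x_0$ at which the solution would reach the boundary of the (moving) fiber convex set. At such a first-contact point one uses a supporting hyperplane of the convex set, the vanishing of the spatial gradient of the associated support function, and the fact that the Laplacian term points into the convex set there (a second-derivative test), together with the strict inward-pointing of the reaction term, to contradict the minimality of $t_0$; compactness of $M$ guarantees that a first-contact time is attained. By comparison, checking (i)--(iii) is routine, the only delicate points being the frame-independence of $\hat{\mathcal{C}}$ and its parallel-transport invariance, both of which rest squarely on the $O(n,\mathbb{R})$-invariance of $\mathcal{C}$.
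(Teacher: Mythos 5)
Your proposal is correct and follows exactly the route the paper intends: the paper gives no written proof, simply citing Hamilton's maximum principle for systems from \cite{MR862046}, and your argument is the standard verification of its hypotheses (fiberwise closedness and convexity, parallel-transport invariance via the $O(n,\mathbb{R})$-invariance of $\mathcal{C}$, and ODE-invariance of the reaction term, with the harmless factor of $2$ absorbed by a time reparametrization). Nothing further is needed.
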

\begin{rem}
  It could happen that a nonnegativity condition is preserved under
  the Ricci flow while the associated cone is not Ricci flow invariant
  according to our definition, however such examples are not known to
  exist, as far as the knowledge of the authors go.
\end{rem}

\section{Elementary properties of non-coercive curvature cones}
\label{sec:elem-non-coer}
We prove here some properties of non-coercive curvature cones.

\begin{dfn}
A curvature cone is said to be non-coercive if it contains a nontrivial vector subspace.
\end{dfn}
\begin{ex}
  $\mathcal{C}_{\scal}$, the cone of curvature operators with
  nonnegative scalar curvature is an example of a non-coercive curvature
  cone.
\end{ex}
The $O(n,\mathbb{R})$ invariance of curvature cones gives the following :
\begin{prop}
  Let $\mathcal{C}$ be a non-coercive curvature cone, and
  $\mathcal{V}$ be the biggest vector space included in $\mathcal{C}$,
  then one of the following holds :
  \begin{enumerate}
  \item $\mathcal{V}=S^2_B\Lambda^2\mathbb{R}^n$,
  \item $\mathcal{V}=S^2_0\mathbb{R}^n\wedge\id\oplus\mathcal{W}$, in
    this case $\mathcal{C}=\mathcal{C}_{\scal}$,
  \item $\mathcal{V}=S^2_0\mathbb{R}^n\wedge\id$,
  \item $\mathcal{V}=\mathcal{W}$.
  \end{enumerate}
\end{prop}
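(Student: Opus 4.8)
The plan is to exploit that the largest linear subspace contained in $\mathcal{C}$ is its \emph{lineality space} $\mathcal{V}=\mathcal{C}\cap(-\mathcal{C})$, and that this space is forced to be $O(n)$-invariant, so its classification reduces to the representation theory of \eqref{eq:decomp}. First I would observe that since the $O(n)$-action is linear it preserves $-\mathcal{C}$, hence preserves $\mathcal{V}=\mathcal{C}\cap(-\mathcal{C})$; thus $\mathcal{V}$ is an $O(n)$-invariant subspace. Because \eqref{eq:decomp} writes $S^2_B\Lambda^2\mathbb{R}^n$ as a direct sum of three pairwise inequivalent irreducible representations ($\mathbb{R}\Id$ being the trivial one, as $g.\Id=\Id$ for orthogonal $g$), Schur's lemma forces any invariant subspace --- in particular $\mathcal{V}$ --- to be the direct sum of a subcollection of $\{\mathbb{R}\Id,\ S^2_0\mathbb{R}^n\wedge\id,\ \mathcal{W}\}$. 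This already cuts the possibilities down to the eight sub-sums.

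Next I would use the hypothesis that $\Id$ lies in the \emph{interior} of $\mathcal{C}$ to eliminate every sub-sum containing the factor $\mathbb{R}\Id$, except the whole space. The key step here is the convexity observation: if $\Id\in\mathcal{V}$ then $-\Id\in\mathcal{C}$ as well, and since some ball $B(\Id,\eps)$ lies in $\mathcal{C}$, averaging $\Id+X$ (for $\|X\|<\eps$) with $-\Id$ shows $\tfrac12 X\in\mathcal{C}$; hence $0$ is interior to $\mathcal{C}$, and being a cone $\mathcal{C}=S^2_B\Lambda^2\mathbb{R}^n$, so $\mathcal{V}=S^2_B\Lambda^2\mathbb{R}^n$ too. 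Consequently, if $\mathcal{V}$ contains $\mathbb{R}\Id$ it is the whole space (case 1); otherwise $\mathcal{V}\subseteq S^2_0\mathbb{R}^n\wedge\id\oplus\mathcal{W}$. Discarding $\mathcal{V}=\{0\}$ by non-coercivity, the surviving invariant subspaces are exactly $S^2_0\mathbb{R}^n\wedge\id$, $\mathcal{W}$, and $S^2_0\mathbb{R}^n\wedge\id\oplus\mathcal{W}$, matching cases 3, 4 and 2.

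It remains to verify the extra claim in case 2, namely that $\mathcal{V}=S^2_0\mathbb{R}^n\wedge\id\oplus\mathcal{W}$ forces $\mathcal{C}=\mathcal{C}_{\scal}$. Here $\mathcal{V}$ is a hyperplane: it is precisely the kernel of the trace functional, i.e. $\{\scal=0\}$. A closed convex set containing a hyperplane $H$ and having nonempty interior is either $H$, the whole space, or a closed half-space bounded by $H$; since $\Id\in\mathcal{C}\setminus\mathcal{V}$ the set $\mathcal{C}$ strictly contains $H$, and since its lineality space is $\mathcal{V}=H\neq S^2_B\Lambda^2\mathbb{R}^n$ it is not the whole space, so $\mathcal{C}$ must be one of the two half-spaces bounded by $\{\scal=0\}$. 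As $\Id$ has positive scalar curvature and lies in $\mathcal{C}$, we conclude $\mathcal{C}=\mathcal{C}_{\scal}$.

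I expect the only genuinely delicate point to be the representation-theoretic input --- that the three summands in \eqref{eq:decomp} are irreducible and pairwise inequivalent over the \emph{full} orthogonal group (this is where $O(n)$ versus $SO(n)$ matters, since in dimension $4$ the Weyl factor splits under $SO(4)$). Everything else is elementary convex geometry: the identification of $\mathcal{V}$ with $\mathcal{C}\cap(-\mathcal{C})$, the interior argument forcing the whole space, and the half-space dichotomy. These I would fold into short lemmas, drawing where convenient on the convex-cone facts collected in the appendix.
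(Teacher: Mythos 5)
Your proposal is correct and takes essentially the same route as the paper: identify $\mathcal{V}$ as an $O(n)$-invariant subspace (the paper via its Proposition \ref{lem_maxVectorSpace}, you via the lineality space $\mathcal{C}\cap(-\mathcal{C})$, which amounts to the same thing), reduce to sub-sums of the three pairwise inequivalent irreducibles in \eqref{eq:decomp}, rule out $\mathbb{R}\Id\subset\mathcal{V}$ unless $\mathcal{C}$ is everything by the interior-of-$\Id$ argument, and settle case 2 by the hyperplane/half-space dichotomy. One small caveat: your trichotomy ``a closed convex set containing a hyperplane with nonempty interior is the hyperplane, a half-space, or everything'' is false for general convex sets (a slab between two parallel hyperplanes is a counterexample) but is valid for convex cones, which is the situation at hand.
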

\begin{proof}
  Thanks to Proposition \ref{lem_maxVectorSpace}, we know that
  $\mathcal{V}$ exists and is an $O(n,\mathbb{R})$ invariant subspace of
  $S^2_B\Lambda^2\mathbb{R}^n$. Thus it is a direct sum of some of the
  elements of decomposition \eqref{eq:decomp} (which are irreducible
  and pairwise non isomorphic). The only thing to check
  is that if $\mathbb{R}\Id\subset\mathcal{V}$, then
  $\mathcal{V}=S^2_B\Lambda^2\mathbb{R}^n$. This follows from the fact
  that $\Id$ is in the interior of $\mathcal{C}$, thus if $-\Id$ is in
  $\mathcal{C}$ then $0$ is in the interior of $\mathcal{C}$ and
  $\mathcal{C}=S^2_B\Lambda^2\mathbb{R}^n$.

  The fact that $\mathcal{C}=\mathcal{C}_{\scal}$ in the second case
  comes from the fact
  $\mathcal{V}=S^2_0\mathbb{R}^n\wedge\id\oplus\mathcal{W}$ is an
  hyperplane included in the boundary of $\mathcal{C}$ (Proposition
  \ref{prop_VboundaryC}). Thus $\mathcal{C}$ has to be one side of the
  hyperplane, since $\mathcal{C}$ contains the identity, $\mathcal{C}=\mathcal{C}_{\scal}$.
\end{proof}
\begin{prop}
Let $\mathcal{C}$ be a curvature cone. Then the following statements are equivalent :
\begin{enumerate}
\item $\mathcal{C}$ is non-coercive.
\item $\mathcal{C}$ contains a non zero curvature operator whose scalar curvature is zero.
\item $\mathcal{C}\cap \{\R|\tr(\R)\leq 1\}$ is not bounded.
\end{enumerate}
\end{prop}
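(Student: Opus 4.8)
The plan is to prove the three equivalences by a cycle of implications, showing $(1)\Rightarrow(2)\Rightarrow(3)\Rightarrow(1)$. The statement is elementary and purely about convex geometry together with the linear-algebraic meaning of the trace functional, so I expect no serious obstacle; the work is in carefully unwinding the definition of non-coercive (containing a nontrivial vector subspace) and relating it to the behavior of the linear functional $\R\mapsto\tr(\R)$ on $\mathcal{C}$.

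First I would prove $(1)\Rightarrow(2)$. If $\mathcal{C}$ is non-coercive it contains a nontrivial vector subspace $\mathcal{V}$, so pick any nonzero $\R\in\mathcal{V}$. Since $\mathcal{V}$ is a vector space, both $\R$ and $-\R$ lie in $\mathcal{C}$, hence $\tr(\R)\geq 0$ and $\tr(-\R)=-\tr(\R)\geq 0$, which forces $\tr(\R)=0$. As $\tr(\R)=0$ is exactly the condition of vanishing scalar curvature (scalar curvature is twice the trace), this $\R$ is the desired nonzero operator of zero scalar curvature.

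Next I would prove $(2)\Rightarrow(3)$. Suppose $\R\neq 0$ has $\tr(\R)=0$. Because $\mathcal{C}$ is a cone, the entire ray $\{t\R : t\geq 0\}$ lies in $\mathcal{C}$, and every point of this ray satisfies $\tr(t\R)=0\leq 1$, so the ray is contained in $\mathcal{C}\cap\{\R\mid\tr(\R)\leq 1\}$. Since $\R\neq 0$, this ray is unbounded, so $\mathcal{C}\cap\{\R\mid\tr(\R)\leq 1\}$ is unbounded.

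Finally I would prove $(3)\Rightarrow(1)$, which is the only step requiring a little more care. Assume the slice $S=\mathcal{C}\cap\{\R\mid\tr(\R)\leq 1\}$ is unbounded. I would use the fact that the trace functional is bounded below by $0$ on $\mathcal{C}$ (as $\mathcal{C}$ is a nonnegativity cone contained in $\mathcal{C}_{\scal}$, which follows since $\Id$ is interior and $\mathcal{C}_{\scal}$ is the largest invariant cone), so $S=\mathcal{C}\cap\{0\leq\tr(\R)\leq 1\}$. The plan is to produce a nonzero element of the recession cone of $\mathcal{C}$ lying in the hyperplane $\{\tr=0\}$: take a sequence $\R_k\in S$ with $\|\R_k\|\to\infty$, normalize to $\R_k/\|\R_k\|$, and extract a convergent subsequence with limit $\R_\infty$, which is a unit vector. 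Since $\mathcal{C}$ is a closed cone, $\R_\infty\in\mathcal{C}$, and since $0\leq\tr(\R_k)\leq 1$ while $\|\R_k\|\to\infty$, dividing by $\|\R_k\|$ and passing to the limit gives $\tr(\R_\infty)=0$. Then $-\R_\infty$ also has zero trace, and I would argue that $-\R_\infty\in\mathcal{C}$ as well: indeed $\R_\infty$ is a nonzero scalar-flat direction in the closed invariant cone, and invoking the representation-theoretic classification of the maximal subspace (the Proposition preceding this one, which shows the maximal vector space is one of the listed $O(n)$-invariant summands) together with closedness of $\mathcal{C}$ shows that the $O(n)$-orbit and linear span of such a direction must already lie in $\mathcal{C}$, so $\mathcal{C}$ contains a nontrivial subspace. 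The cleanest route, which I would adopt to avoid overusing the classification, is: $\R_\infty$ and its reflection $-\R_\infty$ are both limits of normalized rays in $\mathcal{C}$ because the recession cone of a closed convex cone is the cone itself, and a scalar-flat recession direction whose reflection also fails to increase trace must, by the boundedness-below of trace, be two-sided; hence $\mathbb{R}\R_\infty\subset\mathcal{C}$, making $\mathcal{C}$ non-coercive. I expect the verification that $-\R_\infty\in\mathcal{C}$ to be the main point to get right, and I would handle it by the recession-cone argument rather than by direct orbit averaging.
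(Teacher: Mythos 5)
The key step, $(3)\Rightarrow(1)$ (equivalently $(2)\Rightarrow(1)$), has a genuine gap. Producing the unit-norm limit $\R_\infty\in\mathcal{C}$ with vanishing trace is fine (it is exactly the paper's proof that $(3)\Rightarrow(2)$), but your argument that $-\R_\infty\in\mathcal{C}$ --- ``a scalar-flat recession direction whose reflection also fails to increase trace must, by the boundedness-below of trace, be two-sided'' --- is not a proof, and the statement it aims at is false for general closed convex cones. Take $E=\mathbb{R}^2$ and $\mathcal{C}=\{(s,t)\ :\ s\geq 0,\ t\geq 0\}$ with ``trace'' equal to $s$: the slice $\{s\leq 1\}$ is unbounded and $(0,1)$ is a nonzero trace-free element, yet $\mathcal{C}$ contains no line. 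What makes the implication true for curvature cones is the $O(n)$-invariance, and it enters through representation theory rather than convex geometry: $\mathcal{C}\cap(S^2_0\mathbb{R}^n\wedge\id\oplus\mathcal{W})$ is a closed convex $O(n)$-invariant cone inside a direct sum of \emph{nontrivial} irreducibles, and Proposition \ref{prop_coneInIrredRep} (proved by averaging a dual vector over the group) forces such a cone to be a vector subspace. You mention this route but explicitly set it aside in favour of the recession-cone argument; that is precisely the wrong choice, since the group action is the entire content of this step.

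A secondary problem is your repeated use of the claim that $\tr\geq 0$ on $\mathcal{C}$, justified by ``$\mathcal{C}_{\scal}$ is the largest invariant cone.'' The proposition concerns arbitrary curvature cones (closed, convex, $O(n)$-invariant, with $\Id$ interior), which need not be contained in $\mathcal{C}_{\scal}$; the whole space $S^2_B\Lambda^2\mathbb{R}^n$ is one such cone, and no Ricci flow invariance is assumed here anyway. This breaks $(1)\Rightarrow(2)$ as you wrote it, and in $(3)\Rightarrow(1)$ you only get $\tr(\R_\infty)\leq 0$ rather than $=0$. Both points are repairable: either add the suitable nonnegative multiple of $\Id$ to reach the trace-zero hyperplane (noting that if the result is zero then $-\Id\in\mathcal{C}$, so $0$ is interior and $\mathcal{C}=S^2_B\Lambda^2\mathbb{R}^n$, which certainly contains nonzero trace-free operators), or, for $(1)\Rightarrow(2)$, quote the classification of the maximal subspace $\mathcal{V}$ from the preceding proposition, as the paper does. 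These are minor; the missing averaging argument in the first paragraph is the essential defect.
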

\begin{rem}
  The third characterisation is useful in applications. It allows, when
  one knows that a manifold $(M,g)$ has nonnegative
  $\mathcal{C}$-curvature, and $\mathcal{C}$ is coercive, to get a bound on the full curvature tensor
  from an upper bound on the scalar curvature.
\end{rem}
\begin{proof}
  (1) implies (2) comes from the previous proposition, (2) implies (3)
  is easy by scaling the nonzero curvature operator in $\mathcal{C}$
  with zero scalar curvature.

  We show that (3) implies (2). The hypothesis tells us there is
  a sequence $\R_i$ of elements of $\mathcal{C}$ whose norm tends to
  infinity. Set $\tilde{\R}_i=\frac{\R_i}{\|\R_i\|}$. This is a
  bounded sequence of elements of $\mathcal{C}$. Up to a subsequence,
  it converges to a curvature operator which is in $\mathcal{C}$, has
  norm 1 and zero scalar curvature.

  We now show that (2) implies (1). Let $\mathcal{C}'=\mathcal{C}\cap
  (S^2_0\mathbb{R}^n\wedge\id\oplus\mathcal{W})$. This is an
  $O(n,\mathbb{R})$ cone of
  $S^2_0\mathbb{R}^n\wedge\id\oplus\mathcal{W}$. By Proposition
  \ref{prop_coneInIrredRep}, we have that $\mathcal{C}'$ is a vector
  space, thus $\mathcal{C}$ is non-coercive.
\end{proof}

\section{Curvature cones containing a Ricci flat operator}
\label{sec:proof-main-result}
This section gives the proof of Theorem \ref{thm:main}.

We will need the following lemma which shows how the quadratic vector field
$Q$ and its associated bilinear map $B$ act on the different parts of the decomposition of $S^2_B\Lambda^2\mathbb{R}^n$ in \eqref{eq:decomp}. This lemma is due to Böhm and Wilking (\cite{MR2415394}).
\begin{lem}\label{lem_QandIrreds}
\begin{itemize}
  \item $Q(\Id)=(n-1)\Id$.
  \item If $\Wop\in\mathcal{W}$, $B(\Wop,\Id)=0$.
  \item If $\Wop\in\mathcal{W}$ and $\R_0\in S^2_0\mathbb{R}^n\wedge\id$, then $B(\R_0,\Wop)\in S^2_0\mathbb{R}^n\wedge\id$.
  \item If $\R_0\in S^2_0\mathbb{R}^n\wedge\id$, then $B(\R_0,\Id)\in
    S^2_0\mathbb{R}^n\wedge\id$.
  \end{itemize}
\end{lem}

\begin{lem}\label{lem_computeBR0W}
  Let $\R_0$ be the traceless Ricci part of the curvature operator of
  $\mathbb{S}^{n-2}\times\mathbb{H}^2$ with its product metric where the
  first factor has constant curvature $1$ and the second has constant
  curvature $-1$.
  Define $\Wop=Q(\R_0)_{\mathcal{W}}$, the Weyl part of $Q(\R_0)$. Then there
  exists $a> 0$ such that :
  \[B(\R_0,\Wop)=a\R_0.\]
\end{lem}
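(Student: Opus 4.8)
The plan is to combine an equivariance argument, which forces $B(\R_0,\Wop)$ to be proportional to $\R_0$, with a short computation that pins down the sign of the proportionality constant. Write $\R_0=A_0\wedge\id$, where $A_0$ is a positive multiple of the traceless Ricci tensor of $\mathbb{S}^{n-2}\times\mathbb{H}^2$; in an orthonormal basis adapted to the splitting $\mathbb{R}^n=\mathbb{R}^{n-2}\oplus\mathbb{R}^2$ it has the block-scalar form $A_0=\alpha\,\id_{\mathbb{R}^{n-2}}\oplus\beta\,\id_{\mathbb{R}^2}$ with $(n-2)\alpha+2\beta=0$ and $\alpha-\beta$ proportional to $n-2\neq0$, so $A_0$ has exactly two eigenspaces and its stabiliser in $O(n)$ is $G=O(n-2)\times O(2)$. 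First I would observe that both $\R_0$ and $\Wop=Q(\R_0)_{\mathcal W}$ are fixed by $G$: the first by construction, and since $Q$ and the orthogonal projection onto $\mathcal W$ are $O(n)$-equivariant, the second as well. As $B$ is $O(n)$-equivariant, $B(\R_0,\Wop)$ is $G$-invariant, and by Lemma \ref{lem_QandIrreds} it lies in $S^2_0\mathbb{R}^n\wedge\id$. Now $S^2_0\mathbb{R}^n\wedge\id\cong S^2_0\mathbb{R}^n$ as $O(n)$-representations, and by Schur's lemma the $G$-fixed traceless symmetric matrices are exactly the block-scalar ones $\lambda\,\id_{\mathbb{R}^{n-2}}\oplus\mu\,\id_{\mathbb{R}^2}$ with $(n-2)\lambda+2\mu=0$: a one-dimensional space spanned by $A_0$. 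Hence the $G$-fixed subspace of $S^2_0\mathbb{R}^n\wedge\id$ is $\mathbb{R}\R_0$, and $B(\R_0,\Wop)=a\R_0$ for some $a\in\mathbb{R}$.

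To identify $a$ I would use that the trilinear form $(X,Y,Z)\mapsto\ps{B(X,Y)}{Z}$ is totally symmetric, a standard property of the Böhm--Wilking bilinear map (\cite{MR2415394}) following from cyclicity of the trace for the $\R^2$ part and from the symmetry of the structure constants of $\mathfrak{so}(n)$ for the $\R^\#$ part. Pairing with $\R_0$ and using this symmetry together with $B(\R_0,\R_0)=Q(\R_0)$ gives
\[a\,\|\R_0\|^2=\ps{B(\R_0,\Wop)}{\R_0}=\ps{B(\R_0,\R_0)}{\Wop}=\ps{Q(\R_0)}{\Wop}=\ps{Q(\R_0)}{Q(\R_0)_{\mathcal W}}=\|\Wop\|^2,\]
where the last equality is orthogonal projection onto $\mathcal W$. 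Since $\R_0\neq0$ (as $\alpha\neq\beta$), this shows $a=\|\Wop\|^2/\|\R_0\|^2\geq0$, and that $a>0$ is \emph{equivalent} to $\Wop\neq0$, i.e.\ to the assertion that $Q(\R_0)$ has nonzero Weyl part.

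It remains to check that $Q(\R_0)=\R_0^2+\R_0^\#$ is not a Weyl-free (conformally flat) operator, and I expect this to be the main obstacle, being the one genuinely computational point. Since $\R_0=A_0\wedge\id$ is diagonal in the basis $\{e_i\wedge e_j\}$, with $\ps{\R_0(e_i\wedge e_j)}{e_i\wedge e_j}=\tfrac12(\lambda_i+\lambda_j)$ for the eigenvalues $\lambda_i$ of $A_0$, the square $\R_0^2$ is immediate and $\R_0^\#$ can be read off from its definition in the adapted basis. Concretely, I would evaluate the three $G$-invariant quantities $\ps{Q(\R_0)(e_i\wedge e_j)}{e_i\wedge e_j}$ for a plane inside $\mathbb{R}^{n-2}$, a mixed plane, and the plane $\mathbb{R}^2$: a Weyl-free $G$-invariant operator is of the form $S\wedge\id$ with $S$ block-scalar, for which these three numbers obey the affine midpoint relation, so a violation of that relation forces $\Wop\neq0$ and hence $a>0$. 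Should the diagonal entries alone fail to detect $\Wop$, the same conclusion follows from computing $\|Q(\R_0)\|^2$ together with its $\mathbb{R}\Id$- and $S^2_0\mathbb{R}^n\wedge\id$-components and checking $\|\Wop\|^2=\|Q(\R_0)\|^2-\|Q(\R_0)_{\Id}\|^2-\|Q(\R_0)_0\|^2>0$.
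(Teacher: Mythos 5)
Your proposal matches the paper's proof on two of its three ingredients. The colinearity $B(\R_0,\Wop)\in\mathbb{R}\R_0$ via invariance under the stabiliser $O(n-2)\times O(2)$ and Schur's lemma is a clean repackaging of the paper's Claim \ref{claim_carac_R0}: saying that a $G$-fixed element of $S^2_0\mathbb{R}^n\wedge\id$ corresponds to a block-scalar traceless matrix is the same as saying it admits $E\wedge E$, $E\wedge F$, $F\wedge F$ as eigenspaces, and your equivariance bookkeeping ($Q$, $B$ and the projections onto the irreducible summands all commute with $O(n)$) is if anything tidier than the paper's case-by-case check. Likewise the identity $a\|\R_0\|^2=\ps{B(\R_0,\R_0)}{\Wop}=\|\Wop\|^2$ from the total symmetry of $\ps{B(\cdot,\cdot)}{\cdot}$ is exactly the paper's Claim \ref{claim_Ldif0}, so everything reduces, in both proofs, to $\Wop\neq 0$.

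That last point is where you diverge from the paper, and it is also the one place where your write-up is a plan rather than a proof: you say you \emph{would} evaluate the diagonal entries of $Q(\R_0)$ and check the midpoint relation, but you do not do it. For the record, the computation does succeed: with $\R_0$ diagonal on the $e_i\wedge e_j$ with eigenvalues $r_{ij}=\tfrac{1}{2}(\lambda_i+\lambda_j)$ one has $Q(\R_0)_{ij}=r_{ij}^2+\sum_{k\neq i,j}r_{ik}r_{kj}$, and the defect $q_{EF}-\tfrac{1}{2}(q_{EE}+q_{FF})$ works out to a strictly negative quantity for every $n\geq 4$, so the diagonal test already detects $\Wop$ and your fallback via norms is not needed. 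Still, this is an $n$-dependent calculation (and requires justifying the formula for $\R^\#$ of a diagonal operator), which is precisely what the paper's remark says it wants to avoid. The paper's shortcut is worth knowing: by Lemma \ref{lem_QandIrreds}, $Q(\R_0)_{\mathcal W}=Q(\R)_{\mathcal W}$ where $\R=\R_{\Id}+\R_0$ is the full curvature operator of $\mathbb{S}^{n-2}\times\mathbb{H}^2$; since $Q$ respects product structures, $Q(\R)$ is the curvature operator of the product $\mathbb{S}^{n-2}(n-3)\times\mathbb{S}^2(1)$, which is not conformally flat, so its Weyl part is nonzero. Either finish your computation or substitute this argument; as submitted, the key non-vanishing step is asserted but not established.
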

\begin{rem}
  It is of course possible to directly compute $B(\R_0,\Wop)$ to prove
  the result and get the exact value of $a$. However, the calculation
  involves various constants depending on $n$ whose expression is a
  bit involved. The proof we provide bypasses this difficulty, at the
  cost of not providing an explicit value for $a$.
\end{rem}
\begin{proof}
  Let $\Lop=B(\R_0,\Wop)$. We first prove :
  \begin{claim}\label{claim_Ldif0}
    $\ps{\Lop}{\R_0}>0$.
  \end{claim}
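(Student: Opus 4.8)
The plan is to establish Claim~\ref{claim_Ldif0}, namely that $\ps{\Lop}{\R_0} = \ps{B(\R_0,\Wop)}{\R_0} > 0$, by relating this inner product to a directional derivative of a natural $O(n)$-invariant functional. First I would observe that since $\Wop = Q(\R_0)_{\mathcal{W}}$, and since by Lemma~\ref{lem_QandIrreds} the map $\R_0 \mapsto B(\R_0,\Wop)$ stays inside $S^2_0\mathbb{R}^n\wedge\id$, the pairing $\ps{B(\R_0,\Wop)}{\R_0}$ is well-defined and reduces to a computation involving only the $S^2_0$ and $\mathcal{W}$ components. The key idea is to express $\ps{B(\R_0,\Wop)}{\R_0}$ in terms of $\ps{Q(\R_0)}{Q(\R_0)}$ or a similar manifestly nonnegative quantity. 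Using the definition $B(\R_0,\Wop) = B(\R_0, Q(\R_0)_{\mathcal W})$ and the fact that $B$ is symmetric, I would try to show $\ps{B(\R_0,\Wop)}{\R_0} = \ps{\Wop}{B(\R_0,\R_0)} = \ps{\Wop}{Q(\R_0)}$, where the last step uses $B(\R_0,\R_0)=Q(\R_0)$. Since $\Wop$ is precisely the Weyl projection of $Q(\R_0)$ and projections are self-adjoint, this equals $\ps{\Wop}{\Wop} = \|\Wop\|^2 \geq 0$.

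The crux of the claim is therefore to show that this is strictly positive, i.e.\ that $\Wop = Q(\R_0)_{\mathcal{W}} \neq 0$ for the specific $\R_0$ coming from $\mathbb{S}^{n-2}\times\mathbb{H}^2$. This is where I expect to do an explicit, though hopefully short, verification. The strategy is to compute the Weyl part of $Q(\R_0)=\R_0^2+\R_0^\#$ directly for this product metric: the traceless Ricci operator $\R_0$ of $\mathbb{S}^{n-2}\times\mathbb{H}^2$ is explicit, and one checks that $Q(\R_0)$ acquires a nonzero Weyl component. Intuitively, $\mathbb{S}^{n-2}\times\mathbb{H}^2$ is not conformally flat in dimension $n\geq 4$ (a product of two nonflat space forms of differing curvature has nonvanishing Weyl tensor in higher dimensions), so the quadratic map $Q$ acting on its purely traceless-Ricci curvature should not remain orthogonal to $\mathcal{W}$. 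The cleanest route is to exhibit a single test $2$-form, built from one vector in the $\mathbb{S}^{n-2}$ factor and one in the $\mathbb{H}^2$ factor, on which $\ps{Q(\R_0)\,\omega}{\omega}$ differs from what a pure $S^2_0\mathbb{R}^n\wedge\id$ operator would give, thereby detecting a nonzero $\mathcal W$-component.

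Once Claim~\ref{claim_Ldif0} is in hand, the remaining work (which I anticipate comes after this excerpt) is to upgrade $\ps{\Lop}{\R_0}>0$ to the full statement $\Lop = a\R_0$ with $a>0$. The plan there is representation-theoretic: I would argue that $\Lop = B(\R_0,\Wop)$ must be proportional to $\R_0$ by a Schur-type rigidity. The point is that $\R_0$ spans a very rigid orbit under the stabilizer $H \subset O(n)$ preserving the $\mathbb{S}^{n-2}\times\mathbb{H}^2$ splitting; the isotropy representation forces any $H$-equivariant element of $S^2_0\mathbb{R}^n\wedge\id$ naturally associated to $\R_0$ to lie in the one-dimensional subspace spanned by $\R_0$ itself. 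Since $\Wop$ and hence $B(\R_0,\Wop)$ are built $H$-equivariantly from $\R_0$, and $B(\R_0,\Wop) \in S^2_0\mathbb{R}^n\wedge\id$ by Lemma~\ref{lem_QandIrreds}, we conclude $\Lop$ is a multiple of $\R_0$; the sign and nonvanishing of the multiple $a$ then follow immediately from $\ps{\Lop}{\R_0}>0$ together with $\|\R_0\|^2>0$.

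The main obstacle I foresee is the strict positivity in Claim~\ref{claim_Ldif0}: reducing $\ps{\Lop}{\R_0}$ to $\|\Wop\|^2$ is formal, but certifying $\Wop\neq 0$ requires genuinely knowing that $Q$ produces Weyl curvature out of this specific traceless-Ricci input, which is the one place a short but unavoidable computation (or a clean conceptual argument via non-conformal-flatness of $\mathbb{S}^{n-2}\times\mathbb{H}^2$) is needed.
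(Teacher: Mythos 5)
Your reduction of $\ps{\Lop}{\R_0}$ to $\|Q(\R_0)_{\mathcal W}\|^2$ is exactly the paper's first step; just note that what you need there is not merely $B(X,Y)=B(Y,X)$ but the symmetry of the full trilinear form $(\R_1,\R_2,\R_3)\mapsto\ps{B(\R_1,\R_2)}{\R_3}$ in all three slots, a nontrivial fact of B\"ohm--Wilking which the paper cites. The genuine gap is in the second half. Your ``clean conceptual argument'' rests on the assertion that $\mathbb{S}^{n-2}\times\mathbb{H}^2$ is not conformally flat, and that assertion is false: the product of $\mathbb{S}^{n-2}$ of curvature $+1$ with $\mathbb{H}^2$ of curvature $-1$ \emph{is} conformally flat (the paper uses this very fact in the proof of Theorem \ref{thm_cone_with_cfsf}), and indeed that is precisely why its curvature operator is $\R_{\Id}+\R_0$ with no Weyl component to begin with. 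So non-conformal-flatness of the input manifold cannot be the mechanism producing a Weyl part in $Q(\R_0)$, and as written your argument establishes nothing.

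What the paper actually does is the following. By Lemma \ref{lem_QandIrreds}, $Q(\R_{\Id})$ and $B(\R_{\Id},\R_0)$ have vanishing Weyl part, so $Q(\R_0)_{\mathcal W}=Q(\R)_{\mathcal W}$, where $\R=\R_{\Id}+\R_0$ is the full curvature operator of $\mathbb{S}^{n-2}\times\mathbb{H}^2$. Since $Q$ respects product structures, $Q(\R)$ is itself the curvature operator of the product of spheres $\mathbb{S}^{n-2}$ (curvature $n-3$) and $\mathbb{S}^2$ (curvature $1$), and a product of two positively curved space forms of dimension at least $2$ is \emph{not} conformally flat; hence $\Wop=Q(\R)_{\mathcal W}\neq 0$. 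Your fallback --- computing $Q(\R_0)=\R_0^2+\R_0^{\#}$ by hand --- would also work, but you only sketch it, and the single test you propose (one diagonal value $\ps{Q(\R_0)\omega}{\omega}$ on a mixed $2$-plane) cannot by itself certify a nonzero $\mathcal W$-component: you would have to subtract off both the $\Id$- and the $S^2_0\mathbb{R}^n\wedge\id$-parts of $Q(\R_0)$, which is exactly the dimension-dependent computation the paper's argument is designed to avoid.
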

  Recall that the trilinear map :
  \[(\R_1,\R_2,\R_3)\mapsto \ps{B(\R_1,\R_2)}{\R_3}\] is symmetric in
  all its three entries (see \cite{MR2415394}). Thus :
  $\ps{\Lop}{\R_0}=\ps{B(\R_0,\Wop)}{\R_0}=\ps{B(\R_0,\R_0)}{\Wop}$,
  and we have that :
  \[\ps{\Lop}{\R_0}=\ps{B(\R_0,\Wop)}{\R_0}=\|Q(\R_0)_{\mathcal{W}}\|^2.\]
  So we just need to show that $\Wop=Q(\R_0)_{\mathcal{W}}$ is not zero.

  Let us denote by $\R=\R_{\Id}+\R_0$ the curvature operator of
  $\mathbb{S}^{n-2}\times\mathbb{H}^2$. Then :
  \[Q(\R)=Q(\R_{\Id})+2B(\R_{\Id},\R_0)+Q(\R_0).\]
  By Lemma \ref{lem_QandIrreds}, the first two terms on the right hand
  side of the equality have no Weyl part. This implies that
  $Q(\R_0)_{\mathcal{W}}=Q(\R)_{\mathcal{W}}$.

  Since $\R$ is the curvature operator of
  $\mathbb{S}^{n-2}\times\mathbb{H}^2$ and $Q$ respects product
  structures, $Q(\R)$ is the curvature  operator of a product
  metric on $\mathbb{S}^{n-2}\times\mathbb{S}^2$ where the first
  factor has constant curvature $n-3$ and the second factor has
  constant curvature $1$. Note that this metric is not conformally flat,
  in particular $0\neq
  Q(\R)_{\mathcal{W}}=Q(\R_0)_{\mathcal{W}}$. Claim
  \ref{claim_Ldif0} is proved.

  It remains to prove that $\Lop$ is colinear to $\R_0$.

  We will use the following fact :

  \begin{claim}\label{claim_carac_R0}
    Write $\mathbb{R}^n$ as the direct sum $E\oplus F$ with $E=\{x\ |\
    x_{n-1}=x_n=0\}$ and
    $F=\{x\ |\ x_1=\dots=x_{n-2}=0\}$, and assume that a curvature
    operator $\tilde{\R}\in S^2_0\mathbb{R}^n\wedge\id$ admits
    $E\wedge E$, $E\wedge F$ and $F\wedge F$ as eigenspaces. Then
    $\tilde{\R}$ is a multiple of $\R_0$.
  \end{claim}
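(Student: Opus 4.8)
The plan is to work entirely through the parametrization $A_0\mapsto A_0\wedge\id$ of $S^2_0\mathbb{R}^n\wedge\id$ by traceless symmetric endomorphisms of $\mathbb{R}^n$. First I would identify $\R_0$ concretely. Since $\R_0$ is the traceless Ricci part of $\mathbb{S}^{n-2}\times\mathbb{H}^2$, whose Ricci endomorphism equals $(n-3)$ on $E$ and $-1$ on $F$, one has $\R_0=A_0\wedge\id$ with $A_0=\alpha\,\pi_E+\beta\,\pi_F$, where $\pi_E,\pi_F$ are the orthogonal projections onto $E,F$ and $\alpha,\beta$ satisfy $(n-2)\alpha+2\beta=0$ with $\alpha\neq\beta$ (as $n\neq2$, so in particular $A_0\neq0$). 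A direct check from the formula for the wedge product shows that such an $A_0\wedge\id$ acts as the scalars $\alpha$, $\tfrac{\alpha+\beta}{2}$, $\beta$ on $E\wedge E$, $E\wedge F$, $F\wedge F$ respectively, three pairwise distinct values; this both confirms that $\R_0$ has the stated eigenspaces and motivates the converse.

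For the converse, write an arbitrary $\tilde{\R}\in S^2_0\mathbb{R}^n\wedge\id$ as $\tilde{\R}=B_0\wedge\id$ with $B_0$ traceless symmetric, and decompose $B_0$ in blocks relative to $\mathbb{R}^n=E\oplus F$. I would extract the constraints in two steps. Step one: compute $\tilde{\R}(x\wedge y)=\tfrac12(B_0x\wedge y+x\wedge B_0y)$ for $x,y\in E$ and isolate its $E\wedge F$ component, which equals $\tfrac12\bigl(x\wedge(B_0y)_F-y\wedge(B_0x)_F\bigr)$, where $(\cdot)_F$ denotes the $F$-component. Demanding that $E\wedge E$ be invariant forces $x\wedge(B_0y)_F=y\wedge(B_0x)_F$ for all $x,y\in E$; since $\dim E=n-2\geq2$ one may take $x,y$ independent and, comparing coefficients against the basis $\{u_i\wedge v_a\}$ of $E\wedge F$, conclude that the off-diagonal block of $B_0$ vanishes, so $B_0$ preserves both $E$ and $F$.

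Step two: with $B_0=B_{EE}\oplus B_{FF}$ block-diagonal, compute $\tilde{\R}(x\wedge y)=\tfrac12(B_{EE}x\wedge y+x\wedge B_{FF}y)$ for $x\in E$, $y\in F$, and impose that $E\wedge F$ be a genuine eigenspace, i.e. this equals $\lambda\,(x\wedge y)$. Expanding $B_{FF}y$ along $y$ and an orthogonal $y^\perp\in F$ and comparing the $u_i\wedge y$ against the $u_i\wedge y^\perp$ components forces the $y^\perp$-component to vanish for every $y$, so $B_{FF}=\beta'\,\pi_F$ is scalar; feeding this back and using that $w\wedge y=0$ with $w\in E$, $y\in F\setminus\{0\}$ implies $w=0$ gives $B_{EE}=\alpha'\,\pi_E$. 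Hence $B_0=\alpha'\,\pi_E+\beta'\,\pi_F$, and tracelessness yields $(n-2)\alpha'+2\beta'=0$. As the traceless operators of the form $a\,\pi_E+b\,\pi_F$ form the one-dimensional line spanned by $A_0$, we obtain $B_0=tA_0$ and therefore $\tilde{\R}=t\,\R_0$.

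The main obstacle is purely the bookkeeping in Step two: one must use ``eigenspace'' with its full strength (scalar action, not mere invariance) precisely on the mixed block $E\wedge F$, whereas on $E\wedge E$ only invariance is needed and the $F\wedge F$ condition turns out to be automatic once $B_0$ is block-diagonal. The two wedge computations are otherwise routine, the only delicate points being the sign convention when rewriting $F\wedge E$ as $-E\wedge F$ and the coefficient comparison against the basis $\{u_i\wedge v_a\}$ of $E\wedge F$.
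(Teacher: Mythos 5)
Your proof is correct and takes essentially the same route as the paper: both reduce the claim, via the identification $\tilde{\R}=B_0\wedge\id$, to showing that $E$ and $F$ must be eigenspaces of the traceless Ricci endomorphism, and then invoke the one-dimensionality of the space of traceless symmetric operators with $E$ and $F$ as eigenspaces. The only difference is that you work out in detail (correctly) the block computation that the paper dismisses as ``straightforward to see.''
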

  To prove this, we write $\tilde{\R}$ as
  $\frac{2}{n-2}\tilde{\ric}_0\wedge\id$, where $\tilde{\ric}_0$ is
  the Ricci tensor of $\tilde{\R}$. It is straightforward to see that
  $E\wedge E$, $E\wedge F$ and $F\wedge F$ are eigenspaces of
  $\tilde{\R}$ if and only if $E$ and $F$ are eigenspaces of
  $\tilde{\ric}_0$.

  Moreover, the space of traceless Ricci tensors
  which have $E$ and $F$ as eigenspaces is of dimension 1 (once the
  eigenvalue of $E$ is chosen, the tracelessness imposes the
  eigenvalue on $F$). This
  shows that the conditions we have imposed on $\tilde{\R}$ describe a
  vectorial line in the space of curvature operators. Since $\R_0$
  also satisfies these conditions, Claim \ref{claim_carac_R0} is proved.

  Using the previous claim, we just need to show that $\Lop$ is in
  $S^2_0\mathbb{R}^n\wedge\id$ and admits $E\wedge E$, $E\wedge F$ and
  $F\wedge F$ as eigenspaces. Writing $\Lop=B(\R_0,\Wop)$, Lemma
  \ref{lem_QandIrreds} ensures that $\Lop\in
  S^2_0\mathbb{R}^n\wedge\id$.

  To see that the second hypothesis is
  fulfilled, we make the following observation : if an algebraic
  curvature operator $\R$ admits $E\wedge E$, $E\wedge F$ and
  $F\wedge F$ as eigenspaces, so do $\R_{\Id}$, $\R_0$,
  $\R_\mathcal{W}$, and $Q(\R)$. This is obvious for $\R_{\Id}$. For
  $\R_0$, just notice that the Ricci tensor of $\R$ has $E$ and $F$ as
  eigenspaces. Writing $\R_\mathcal{W}=\R-\R_{\Id}-\R_0$ proves the
  assertion for $\R_\mathcal{W}$. For $Q(\R)$, this is just a
  computation using the definition of $Q$.

  We can now prove that $\Lop$ admits $E\wedge E$, $E\wedge F$ and
  $F\wedge F$ as eigenspaces. First notice that by the previous
  observation, $\Wop=Q(\R_0)_\mathcal{W}$ has eigenspaces $E\wedge E$,
  $E\wedge F$ and $F\wedge F$. Then write :
  \[\Lop=B(\R_0,\Wop)=\frac{1}{2}\bigl (
  Q(\R_0+\Wop)-Q(\R_0)-Q(\Wop)\bigr),\]
  and notice that all the terms on the right hand side have eigenspaces $E\wedge E$,
  $E\wedge F$ and $F\wedge F$. This shows that $\Lop$ satisfies the
  assumptions of Claim \ref{claim_carac_R0} and concludes the proof.
\end{proof}

By Proposition \ref{prop_coneInIrredRep}, the existence of a nonzero Ricci flat operator in $\mathcal{C}$ implies that
$\mathcal{W} \subset \mathcal{C}$. Hence Theorem
\ref{thm:main} is a consequence of the following proposition :
\begin{prop}
 If $\mathcal{C}$ is a Ricci flow invariant curvature cone
  containing $\mathcal{W}$, then $\mathcal{C}$ is
  either the cone of curvature operators with nonnegative scalar
  curvature or the whole space $S^2_B\Lambda^2\mathbb{R}^n$.
\end{prop}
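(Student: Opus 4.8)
The plan is to show that the intersection $\mathcal{C}\cap\{\R\mid\tr(\R)=0\}$ is forced to be the whole hyperplane $S^2_0\mathbb{R}^n\wedge\id\oplus\mathcal{W}$, after which the structure proposition for non-coercive cones at the start of Section~\ref{sec:elem-non-coer} immediately gives $\mathcal{C}=\mathcal{C}_{\scal}$ or $\mathcal{C}=S^2_B\Lambda^2\mathbb{R}^n$. First I would note that $\{\tr=0\}=S^2_0\mathbb{R}^n\wedge\id\oplus\mathcal{W}$ carries no trivial $O(n)$-subrepresentation, so by Proposition~\ref{prop_coneInIrredRep} the cone $\mathcal{C}\cap\{\tr=0\}$ is in fact a vector subspace; being $O(n)$-invariant and containing $\mathcal{W}$, it equals either $\mathcal{W}$ or the full hyperplane. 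The whole problem thus reduces to ruling out the case $\mathcal{C}\cap\{\tr=0\}=\mathcal{W}$, which I assume from now on for contradiction. In this case $\R_0\notin\mathcal{C}$ (it is scalar flat but not Weyl), and likewise $\Wop+\delta\R_0\notin\mathcal{C}$ for $\delta\neq0$, so that $\mathcal{W}\subset\partial\mathcal{C}$ and $\mathcal{W}$ is exactly the lineality space of $\mathcal{C}$.

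Next I would produce a boundary point adapted to Lemma~\ref{lem_computeBR0W}. Since $\Id$ lies in the interior of the closed cone $\mathcal{C}$ while $\R_0\notin\mathcal{C}$, there is a smallest $c^*>0$ with $\R^*:=c^*\Id+\R_0\in\partial\mathcal{C}$. Choose a supporting linear functional $\ell$ at $\R^*$, so $\ell\geq0$ on $\mathcal{C}$, $\ell(\R^*)=0$ and $\ell\neq0$. Because $\mathcal{W}\subset\mathcal{C}$ is a subspace, $\ell$ must vanish on $\mathcal{W}$; because $\Id$ is interior, $\ell(\Id)>0$; hence $\ell(\R_0)=-c^*\ell(\Id)<0$. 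Crucially, adding the lineality direction keeps us on the same supporting hyperplane: for every $t\in\mathbb{R}$ the operator $\R^*_t:=\R^*+t\Wop$ lies in $\partial\mathcal{C}$ and $\ell$ still supports $\mathcal{C}$ there, so Ricci flow invariance forces $\ell(Q(\R^*_t))\geq0$ for all $t$.

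The last step is to expand this inequality using the bilinearity of $Q$:
\[Q(\R^*_t)=Q(\R^*)+2t\,B(\R^*,\Wop)+t^2Q(\Wop).\]
By Lemma~\ref{lem_QandIrreds} we have $B(\Id,\Wop)=0$, and by Lemma~\ref{lem_computeBR0W} $B(\R_0,\Wop)=a\R_0$ with $a>0$, so $B(\R^*,\Wop)=a\R_0$ and the coefficient of $t$ in $\ell(Q(\R^*_t))$ is $2a\,\ell(\R_0)<0$. It remains to handle the $t^2$-coefficient $\ell(Q(\Wop))$, and this is the main obstacle: a priori a parabola opening upward could stay nonnegative for all $t$. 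Here I would use two facts about $\Wop\in\mathcal{W}$. First, $\Wop$ is Ricci flat, so the standard identity $\tr(Q(\R))=\tfrac12|\rho(\R)|^2$ (see \cite{MR2415394}) gives $\tr(Q(\Wop))=0$, i.e. $Q(\Wop)$ is scalar flat. Second, $\Wop$ is a boundary point lying in the lineality space, so the tangent cone there is $\mathcal{C}$ itself and Ricci flow invariance gives $Q(\Wop)\in\mathcal{C}$. Combining, $Q(\Wop)\in\mathcal{C}\cap\{\tr=0\}=\mathcal{W}$, whence $\ell(Q(\Wop))=0$. Thus $\ell(Q(\R^*_t))=\ell(Q(\R^*))+2a\,\ell(\R_0)\,t$ is affine in $t$ with strictly negative slope, so it becomes negative for large $t$, contradicting $\ell(Q(\R^*_t))\geq0$. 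This rules out $\mathcal{C}\cap\{\tr=0\}=\mathcal{W}$ and completes the proof.
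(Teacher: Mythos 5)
Your proof is correct and follows the paper's own argument almost step for step: the same reduction via Proposition \ref{prop_coneInIrredRep} to the case where the lineality space is exactly $\mathcal{W}$, the same boundary point $c^*\Id+\R_0$ (the paper's $\Id+\tau\R_0$ up to scale), the same perturbation $\R^*+t\Wop$ along the lineality space via Proposition \ref{prop_boudaryPreservedByAddingV}, and the same use of Lemmas \ref{lem_QandIrreds} and \ref{lem_computeBR0W} to isolate the linear term $2at\,\ell(\R_0)$ with $a>0$. The only difference is cosmetic: you dualize the final step through a supporting functional $\ell$ and dispose of the $t^2$-term by noting $Q(\Wop)$ is scalar flat and lies in $\mathcal{C}$, whereas the paper works directly with the tangent cone, subtracting $t^2Q(\Wop)\in\mathcal{W}\subset T_{\R}\mathcal{C}$ and letting $t\to\infty$ to conclude $\R_0\in T_{\R}\mathcal{C}$, contradicting the maximality of $\tau$.
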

\begin{proof}
  We assume that $\mathcal{C}$ is not
  $S^2_B\Lambda^2\mathbb{R}^n$. Since
  $\mathbb{R}_+\Id+\mathcal{W}\subset\mathcal{C}$, we have that the
  vector space $\mathcal{V}$ which is defined in
  Proposition~\ref{lem_maxVectorSpace} satisfies
  $\mathcal{W}\subset\mathcal{V}$. We
  will show that $\mathcal{V}$ is in fact the hyperplane
  $S^2_0\mathbb{R}^n\wedge\id\oplus\mathcal{W}=\{\R|\tr(\R)=0\}$. Since
  $\Id\in\mathcal{C}$, this will imply that
  $\mathcal{C}=\{\R|\tr(\R)\geq 0\}$.

  We argue by contradiction and assume that
  $\mathcal{V}=\mathcal{W}$.

  \begin{claim}\label{claim_notTracelessInC}
    $\mathcal{C}\cap S^2_0\mathbb{R}^n\wedge\id=\{0\}$
  \end{claim}
  $\mathcal{C}\cap S^2_0\mathbb{R}^n\wedge\id$ is a convex cone in
  $S^2_0\mathbb{R}^n\wedge\id$ which is invariant under the action of
  $O(n,\mathbb{R})$. Since $O(n,\mathbb{R})$ acts irreducibly on
  $S^2_0\mathbb{R}^n\wedge\id$, if $\mathcal{C}$ contains a non-zero
  curvature operator in $S^2_0\mathbb{R}^n\wedge\id$ then, by
  Proposition \ref{prop_coneInIrredRep},
  $S^2_0\mathbb{R}^n\wedge\id\subset\mathcal{C}$ and
  $S^2_0\mathbb{R}^n\wedge\id\subset\mathcal{V}$, which contradicts
  the assumption that $\mathcal{V}=\mathcal{W}$. The claim is proved.

  As in Lemma \ref{lem_computeBR0W}, let $\R_0$ be the traceless Ricci
  part of the curvature operator of
  $\mathbb{S}^{n-2}\times\mathbb{H}^2$, where each factor
  is endowed with its constant curvature metric of curvature +1 or -1.

  Let $\tau$ be the greatest $t$ such that
  $\Id+t\R_0\in\mathcal{C}$. $\tau$ is positive and finite : it is
  positive because $\Id$ is in the interior of $\mathcal{C}$, it is
  finite because if $\Id+t\R_0$ stays in $\mathcal{C}$ as $t$ goe to
  infinity, then $\frac{1}{t}\Id+\R_0$ stays in $\mathcal{C}$ and
  $\R_0$ is in $\mathcal{C}$, which contradicts Claim
  \ref{claim_notTracelessInC}.
  We set $\R=\Id+\tau\R_0$. The maximality of $\tau$ implies that
  $\R\in\partial\mathcal{C}$.

    Let $\Wop=Q(\R_0)_\mathcal{W}\in\mathcal{W}$.
  Using Proposition \ref{prop_boudaryPreservedByAddingV}, we have that
  for any $t\in\mathbb{R}$ :
  \[\R+t\Wop\in\partial\mathcal{C},\]
  which
  implies :
  $Q(\R+t\Wop)=Q(\R)+2tB(\R,\Wop)+t^2Q(\Wop) \in T_{\R}\mathcal{C}$ since
  $\mathcal{C}$ is Ricci flow invariant.

  By Lemma \ref{lem_QandIrreds}, $-t^2 Q(\Wop)\in\mathcal{W}$. Since
  $\mathcal{W}\subset T_{\R}\mathcal{C}$ and $T_{\R}\mathcal{C}$ is a
  convex cone, this
  implies that $Q(\R)+2tB(\R,\Wop)\in T_{\R}\mathcal{C}$. Therefore, we
  have that $\frac{1}{2t}Q(\R)+B(\R,\Wop)\in
  T_{\R}\mathcal{C}$. Letting $t$ go to infinity (and using that
  $T_{\R}\mathcal{C}$ is closed), we then
  have that :
  \[B(\R,\Wop)\in T_{\R}\mathcal{C}.\]
  Moreover, using Lemmas \ref{lem_QandIrreds} and
  \ref{lem_computeBR0W} :
  \[B(\R,\Wop)=B(\Id,\Wop)+\tau B(\R_0,\Wop)=a\tau\R_0.\]

  We have proved that $\R_0\in T_{\R}\mathcal{C}$. This
  implies that there is some $t>0$ such that
  $\R+t\R_0\in\mathcal{C}$, that is to say
  $\Id+(\tau+t)\R_0\in\mathcal{C}$. This contradicts the definition of $\tau$.
  \end{proof}

\section{Ricci flow invariant cones containing the curvature operator of an Einstein symmetric space}
\label{sec:ricci-flow-invariant}
In this section, we prove Corollary
\ref{cor_coneWithEinsteinSymInInterior}. We first need a couple of lemmas.

\begin{lem}
  Let $(M^n,g_0)$ be a non negatively curved Einstein symmetric space,
  then its curvature operator $\R$ satisfies :
  \begin{equation}
  Q(\R)=\lambda\R\label{QRlambdaR}
  \end{equation}
  where $\lambda>0$ is such that $\ric_{g_0}={\lambda}g_0$.
\end{lem}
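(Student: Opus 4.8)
The plan is to exploit the fact that an Einstein metric evolves under the Ricci flow by a homothety, together with the vanishing of the Laplacian term for symmetric spaces: under these two inputs Hamilton's evolution equation degenerates into the pure reaction ODE, and reading off that ODE at time zero directly forces \eqref{QRlambdaR}.

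First I would record the two geometric inputs. Since $\ric_{g_0}=\lambda g_0$, the Ricci flow starting at $g_0$ is the homothety $g(t)=(1-2\lambda t)g_0$, defined for $t$ near $0$. Next, under a constant rescaling $g\mapsto cg$, reading the curvature operator off in a $g$-orthonormal frame (which is exactly the identification used to embed $S^2_B\Lambda^2 T_xM$ into $S^2_B\Lambda^2\mathbb{R}^n$) multiplies it by $1/c$, because the $(0,4)$ Riemann tensor scales by $c$ while the orthonormal frame vectors scale by $c^{-1/2}$. Hence along the flow the curvature operator is $\R(t)=(1-2\lambda t)^{-1}\R$, where $\R=\R_{g_0}$. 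Because $(M,g_0)$ is symmetric we have $\nabla\R=0$, so $\Delta_{g(t)}\R(t)=0$ and Hamilton's equation $\partial_t\R=\Delta\R+2Q(\R)$ reduces to the ODE $\tfrac{d}{dt}\R(t)=2Q(\R(t))$.

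It then remains to differentiate $\R(t)=(1-2\lambda t)^{-1}\R$ at $t=0$, which gives $\tfrac{d}{dt}\R(t)|_{t=0}=2\lambda\R$; comparing with $2Q(\R(0))=2Q(\R)$ yields $Q(\R)=\lambda\R$, the two factors of $2$ (one from $\partial_t g=-2\ric$, one from the normalization of $Q$ in the evolution equation) cancelling to produce exactly the asserted constant. Finally, a nonnegatively curved Einstein symmetric space which is not flat is of compact type, so $\lambda>0$; in the flat case $\R=0$ and the identity is trivial.

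The main obstacle, and the only point requiring real care, is the bookkeeping behind the reduction to the ODE: to make rigorous the statement that $\R(t)=(1-2\lambda t)^{-1}\R$ solves $\tfrac{d}{dt}\R=2Q(\R)$ one must pass to the fixed bundle via Uhlenbeck's trick and check both that the Uhlenbeck isomorphism for an Einstein metric is merely a time-dependent scalar multiple of the identity (so that the rescaling law above is unaffected) and that a parallel curvature operator remains in the kernel of the resulting Laplacian. Alternatively one could avoid the flow entirely and verify $\R^2+\R^\#=\lambda\R$ algebraically from the symmetric-space identity $R(X,Y)Z=-[[X,Y],Z]$ on $\mathfrak{p}$, but this is more computational and less transparent than the homothety argument.
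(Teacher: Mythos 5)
Your argument is correct, but it reaches \eqref{QRlambdaR} by a genuinely different mechanism than the paper. The paper's proof is static: it invokes Proposition 3 of Brendle's paper on Einstein manifolds, which is the pointwise identity $\Delta_{g_0}\R_{g_0}+2Q(\R_{g_0})=\lambda\R_{g_0}$ derived from the second Bianchi identity and the Einstein condition, and then kills the Laplacian term using $\nabla\R=0$. You instead run the flow: you observe that an Einstein metric is a homothetic solution $g(t)=(1-2\lambda t)g_0$, compute that the curvature operator read in evolving orthonormal frames is $(1-2\lambda t)^{-1}\R$, kill the Laplacian by the same parallelism argument, and differentiate at $t=0$. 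Both proofs use symmetry only through $\nabla\R=0$; the difference is whether the reaction term is identified via Brendle's Bianchi-identity computation or via the explicit self-similar solution. Your route is self-contained (no external citation) and the scaling bookkeeping is right ($(0,4)$-tensor scales by $c$, four orthonormal frame legs contribute $c^{-2}$, and the two factors of $2$ cancel as you say), at the cost of having to verify the Uhlenbeck-trick reduction you flag at the end; for a homothety the Uhlenbeck isomorphism is indeed the scalar $(1-2\lambda t)^{-1/2}$ and the Levi--Civita connection is unchanged, so a parallel $\R$ stays in the kernel of the pulled-back Laplacian, and your deferred check goes through. Your closing remark that a nonflat nonnegatively curved Einstein symmetric space is of compact type, whence $\lambda>0$, also supplies a justification for the sign of $\lambda$ that the paper simply asserts in the statement.
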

\begin{rem}
  Since the equation $Q(\R)=\lambda\R$ is invariant under the action
  $O(n,\mathbb{R})$, it can be seen either as an equation in
  $S^2_B\Lambda^2T_xM$ for some $x$ in $M$ with $\R$ the curvature
  operator of $g_0$ at $x$, or as an equation in
  $S^2_B\Lambda^2\mathbb{R}^n$, where $\R$ is the expression of the
  curvature operator of $g_0$ at some point $x$ with respect to some
  orthonormal basis of $T_xM$.
\end{rem}
\begin{proof}
  Since $(M^n,g_0)$ is Einstein, we can use Proposition 3 of
  \cite{brendle2010einstein} to get that (note that our definition of
  $Q$ differs from the one used by Brendle by a factor of 2) :
  \[\Delta_{g_0}\R_{g_0}+ 2Q(\R_{g_0})=\lambda\R.\]
  Then, since $g_0$ is symmetric, $\R_{g_0}$ is parallel and
  $\Delta_{g_0}\R_{g_0}=0$. This proves that $Q(\R_{g_0})=\lambda \R_{g_0}$.
\end{proof}
\begin{prop}
  If $\mathcal{C}$ is a Ricci flow invariant cone which contains the
  curvature operator $\R$ of an Einstein symmetric space in its
  interior, then the Weyl part $\R_{\mathcal{W}}$ of $\R$ is in $\mathcal{C}$.
\end{prop}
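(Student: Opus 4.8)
The plan is to exploit the eigenvalue equation $Q(\R)=\lambda\R$ together with Ricci flow invariance, in the same spirit as the proof of the main proposition. The key structural fact is that since $\R$ lies in the interior of $\mathcal{C}$ and $\mathcal{C}$ is a full-dimensional cone, the whole space is the tangent cone $T_{\R}\mathcal{C}=S^2_B\Lambda^2\mathbb{R}^n$, so invariance at $\R$ gives no information directly. The idea, therefore, is to slide along the Weyl direction until we hit the boundary. Concretely, I would consider the curvature operators $\R_s=\R-s\R_{\mathcal{W}}$ for $s\in[0,1]$; note $\R_0=\R$ is interior and I want to understand what happens as $s\to 1$, where $\R_1=\R_{\Id}+\R_0$ is the Einstein-with-no-Weyl part. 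If $\R_1$ already lies in $\mathcal{C}$, I would try to argue directly that $\R_{\mathcal{W}}=\R-\R_1\in\mathcal{C}$ (a difference of two elements need not lie in a cone, so this is delicate and probably not the right track).

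A cleaner approach is to look instead at the ray emanating from $\R$ in the $+\R_{\mathcal{W}}$ and $-\R_{\mathcal{W}}$ directions and to locate the boundary point where invariance bites. Let me set $\tau$ to be the largest $t$ with $\R+t\R_{\mathcal{W}}\in\mathcal{C}$ (or work with $\R-t\R_{\mathcal{W}}$, whichever hits $\partial\mathcal{C}$). Since $\R$ is interior, such a boundary point $\R^*=\R+\tau\R_{\mathcal{W}}$ exists on $\partial\mathcal{C}$ for an appropriate sign, and by Ricci flow invariance $Q(\R^*)\in T_{\R^*}\mathcal{C}$. Expanding,
\[
Q(\R^*)=Q(\R)+2\tau B(\R,\R_{\mathcal{W}})+\tau^2 Q(\R_{\mathcal{W}}),
\]
and I would compute the components using Lemma \ref{lem_QandIrreds}: one checks $B(\Id,\R_{\mathcal{W}})=0$, and the symmetry of the trilinear form lets me control $B(\R_0,\R_{\mathcal{W}})$ and $Q(\R_{\mathcal{W}})$. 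Using $Q(\R)=\lambda\R$ to rewrite the leading term should produce a relation in $T_{\R^*}\mathcal{C}$ that, after subtracting the pieces already known to lie in the tangent cone, isolates a multiple of $\R_{\mathcal{W}}$ itself.

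The main obstacle I anticipate is precisely identifying the Weyl component of $Q(\R_{\mathcal{W}})$ and of $B(\R,\R_{\mathcal{W}})$, and showing that the resulting multiple of $\R_{\mathcal{W}}$ points \emph{into} $\mathcal{C}$ with the correct sign rather than out of it. This is where the symmetric-space rigidity, via $Q(\R)=\lambda\R$ with $\lambda>0$, must be used: the positivity of $\lambda$ should guarantee that the combination entering $T_{\R^*}\mathcal{C}$ forces $\R_{\mathcal{W}}\in T_{\R^*}\mathcal{C}$, and then a short argument (analogous to the final step of the previous proposition, where membership of a direction in the tangent cone at a boundary point yields an actual nearby element of $\mathcal{C}$) promotes this to $\R_{\mathcal{W}}\in\mathcal{C}$. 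I would close the argument by noting that $\mathcal{W}$ is $O(n)$-invariant and irreducible, so once a single nonzero multiple of $\R_{\mathcal{W}}$ is shown to lie in $\mathcal{C}$ one can, if needed, invoke Proposition \ref{prop_coneInIrredRep} to upgrade the conclusion; but the literal statement only demands $\R_{\mathcal{W}}\in\mathcal{C}$, which the tangent-cone computation should deliver directly.
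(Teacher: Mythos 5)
Your strategy does work, and it is genuinely different from the paper's; but the step you flag as the ``main obstacle'' is the entire content of the argument, so let me close it. Since the space is Einstein, $\R_0=0$ and $\R=\R_{\Id}+\R_{\mathcal{W}}$ with $\R_{\Id}=\frac{\lambda}{n-1}\Id$; in particular no $B(\R_0,\R_{\mathcal{W}})$ term ever appears. Decomposing $Q(\R)=\lambda\R$ along \eqref{eq:decomp} and using $B(\R_{\Id},\R_{\mathcal{W}})=0$ (Lemma \ref{lem_QandIrreds}) gives $Q(\R_{\Id})=\lambda\R_{\Id}$ and $Q(\R_{\mathcal{W}})=\lambda\R_{\mathcal{W}}$; this, not the symmetry of the trilinear form, is what controls $Q(\R_{\mathcal{W}})$ and $B(\R,\R_{\mathcal{W}})$. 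You must take the $+\R_{\mathcal{W}}$ direction: if $\R+t\R_{\mathcal{W}}\in\mathcal{C}$ for all $t>0$ you are done by closedness, so assume $\tau<\infty$ is maximal and set $\R^*=\R+\tau\R_{\mathcal{W}}=\R_{\Id}+(1+\tau)\R_{\mathcal{W}}\in\partial\mathcal{C}$. Then
\[
Q(\R^*)=\lambda\R_{\Id}+(1+\tau)^2\lambda\R_{\mathcal{W}}=\lambda\R^*+\lambda\tau(1+\tau)\R_{\mathcal{W}}\in T_{\R^*}\mathcal{C},
\]
and since $-\lambda\R^*\in T_{\R^*}\mathcal{C}$ and the tangent cone is a convex cone, $\R_{\mathcal{W}}\in T_{\R^*}\mathcal{C}$ because $\lambda\tau(1+\tau)>0$ (here $\lambda>0$ from the preceding lemma and $\tau>0$ because $\R$ is interior). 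This contradicts the maximality of $\tau$, so the ray never leaves $\mathcal{C}$ and $\R_{\mathcal{W}}\in\mathcal{C}$. The alternative $-\R_{\mathcal{W}}$ direction you leave open is a dead end: there the coefficient comes out as $\lambda\sigma(\sigma-1)$, which only yields $\R_{\Id}\in\mathcal{C}$, something already known.

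For comparison, the paper uses the same two inputs ($Q(\R)=\lambda\R$ and Lemma \ref{lem_QandIrreds}) but avoids the boundary analysis entirely: it perturbs to $\bar\R=\R-\eps\Id$ in the interior, notes that $\bar\R_{\Id}$ and $\bar\R_{\mathcal{W}}$ are $Q$-eigenvectors with the Weyl eigenvalue strictly larger, integrates Hamilton's ODE explicitly on their span, and recovers $\R_{\mathcal{W}}$ as the limit of $(1-t)\bar\R(t)$ as $t\to1$, using only that $\mathcal{C}$ is a closed cone preserved by the ODE. Your version stays within the tangent-cone formalism of Section \ref{sec:proof-main-result} and needs no perturbation by $-\eps\Id$; the paper's version trades that for the explicit solvability of the ODE and is somewhat shorter once the eigenvalue computation is done.
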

\begin{proof}
  By the previous lemma, $Q(\R)=\lambda\R$. Rescaling $\R$, we can
  assume that $Q(\R)=\R$. We decompose $\R$ along the decomposition
  \eqref{eq:decomp} : $\R=\R_{\Id}+\R_{\mathcal{W}}$. Since
  $B(\R_{\Id},\R_{\mathcal{W}})=0$, we have that
  $\R_{\Id}+\R_{\mathcal{W}}=\R=Q(\R)=Q(\R_{\Id})+Q(\R_{\mathcal{W}})$,
  which implies that $Q(\R_{\Id})=\R_{\Id}$ and $Q(\R_{\mathcal{W}})=\R_{\mathcal{W}}$.

  Since $\R$ is in the interior of $\mathcal{C}$, $\bar{\R}=\R-\eps \Id$ is also
  in $\mathcal{C}$ for some $\eps>0$ small enough. Then we have that
  $Q(\bar{\R}_{\Id})=(1-\eps)\bar{\R}_{\Id}$ and
  $Q(\bar{R}_{\mathcal{W}})=\bar{\R}_{\mathcal{W}}$. This allows us to
  explicitly write the solution to Hamilton's ODE
  $\frac{d}{dt}\bar{\R}(t)=Q(\bar{\R}(t))$ with initial condition
  $\bar{\R}(0)=\bar{\R}$, which is defined for $t<1$ :
  \[\bar{\R}(t)=\frac{1}{1-(1-\eps)
    t}\bar{\R}_{\Id}+\frac{1}{1-t}\bar{\R}_{\mathcal{W}}.\]

  Since $\mathcal{C}$ is Ricci flow invariant, $\bar{\R}(t)$ is in
  $\mathcal{C}$ for all $t\in [0,1)$, and since $\mathcal{C}$ is a
  cone :
  \[\forall t\in[0,1)\quad (1-t)\bar{\R}(t)=\frac{1-t}{1-(1-\eps)
    t}\bar{\R}_{\Id}+\bar{\R}_{\mathcal{W}}\in\mathcal{C}.\]
  Letting $t$ go to 1 and using that $\mathcal{C}$ is closed, we have
  that $\bar{\R}_{\mathcal{W}}={\R}_{\mathcal{W}}\in\mathcal{C}$.
\end{proof}
We can now prove Corollary \ref{cor_coneWithEinsteinSymInInterior} :
\begin{proof}[Proof (of the Corollary) :]
  Under the assumption of the corollary, the previous proposition
  shows that $\mathcal{C}$ contains the Weyl part $\R_\mathcal{W}$ of the curvature
  operator a non trivial symmetric space. In particular,
  $\R_\mathcal{W}\neq 0$. Using Proposition \ref{prop_coneInIrredRep},
  this implies that $\mathcal{W}\subset\mathcal{C}$ and we can
  apply Theorem \ref{thm:main}. This concludes the proof.
\end{proof}

\section{Curvature cones containing a conformally flat scalar flat curvature operator}
In this section we prove Theorems \ref{thm_ex_dim4} and \ref{thm_cone_with_cfsf}.
We begin with Theorem \ref{thm_ex_dim4}, whose statement we recall :
\begin{thm}
Let $\tilde{\mathcal{C}}\subset S^2_B\Lambda^2\mathbb{R}^4$ be any Ricci flow invariant cone which
contains all nonnegative curvature operators and  let :
\[\mathcal{C}=\left\{\R\left|\frac{\scal}{12}+W\in\tilde{\mathcal{C}}\right .\right\}\subset S^2_B\Lambda^2\mathbb{R}^4\]
where $\scal$ and $W$ are the scalar curvature and the Weyl curvature tensor of $\R$.

  Then $\mathcal{C}$ is a Ricci flow invariant cone which contains all conformally flat scalar flat curvature operators.
\end{thm}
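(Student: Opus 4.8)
The map defining $\mathcal{C}$ is the linear projection $\pi\colon\R\mapsto\frac{\scal}{12}\Id+W=\R_{\Id}+\R_{\mathcal{W}}$ onto the subspace $U=\mathbb{R}\Id\oplus\mathcal{W}$, which simply discards the traceless Ricci part $\R_0$; thus $\mathcal{C}=\pi^{-1}(\tilde{\mathcal{C}})$. The first assertions are formal. Since $\pi$ is linear, continuous and $O(4)$-equivariant (being the projection onto a subsum of the irreducible pieces of \eqref{eq:decomp}) and $\tilde{\mathcal{C}}$ is a closed convex $O(4)$-invariant cone, $\mathcal{C}=\pi^{-1}(\tilde{\mathcal{C}})$ is again closed, convex, a cone, and $O(4)$-invariant. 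As $\Id$ is positive definite it lies in the interior of the cone of nonnegative operators, hence in the interior of $\tilde{\mathcal{C}}$; since $\pi(\Id)=\Id$ and $\pi$ is continuous, $\pi^{-1}(\mathrm{int}\,\tilde{\mathcal{C}})$ is an open subset of $\mathcal{C}$ containing $\Id$, so $\Id\in\mathrm{int}\,\mathcal{C}$ and $\mathcal{C}$ is a curvature cone. For the last assertion, a conformally flat scalar flat operator has $W=0$ and $\scal=0$, that is $\R=\R_0$, so $\pi(\R)=0\in\tilde{\mathcal{C}}$ and $\R\in\mathcal{C}$.

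It remains to prove Ricci flow invariance, which by the remark following the definition amounts to showing that Hamilton's ODE $\frac{d}{dt}\R=Q(\R)$ preserves $\mathcal{C}$. I would work in the identification $\Lambda^2\mathbb{R}^4=\Lambda^+\oplus\Lambda^-$ of $\mathfrak{so}(4)$ with $\mathfrak{so}(3)\oplus\mathfrak{so}(3)$, in which a curvature operator is a symmetric block matrix $\begin{pmatrix}A&B\\B^T&C\end{pmatrix}$: the block diagonal part is exactly $U=\mathbb{R}\Id\oplus\mathcal{W}$ (with $A=\frac{\scal}{12}\Id+W^+$ and $C=\frac{\scal}{12}\Id+W^-$) while the off diagonal part $\begin{pmatrix}0&B\\B^T&0\end{pmatrix}$ is exactly $\R_0\in S^2_0\mathbb{R}^4\wedge\id$. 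The decisive feature of dimension $4$ is that the cross brackets vanish, $[\Lambda^+,\Lambda^-]=0$. Writing $\R=u+\R_0$ with $u=\pi(\R)$ and tracking $u(t)=\pi(\R(t))$ along a solution, I would compute $\frac{d}{dt}u=\pi(Q(\R))$ and establish the forced equation $\frac{d}{dt}u=Q(u)+\R_0^2$, in which the forcing term $\R_0^2$ is a nonnegative curvature operator lying in $U$.

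Establishing this forced equation is the technical heart, and all three ingredients are consequences of $[\Lambda^+,\Lambda^-]=0$. First, $U$ is $Q$-invariant: for block diagonal $u$, $u^2$ is block diagonal, and unwinding the definition of $\#$ shows every term in $u^\#$ contains a cross bracket and hence vanishes off the diagonal, so $Q(u)\in U$ and $\pi(Q(u))=Q(u)$. Second, $\pi(Q(\R_0))=\R_0^2$: here $\R_0^2=\begin{pmatrix}BB^T&0\\0&B^TB\end{pmatrix}$ is already block diagonal and nonnegative, and it is a genuine curvature operator since $\tr(BB^T)=\tr(B^TB)$ forces the first Bianchi identity, while the same cross bracket computation shows the block diagonal part of $\R_0^\#$ vanishes. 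Third, the cross term drops out: by Lemma \ref{lem_QandIrreds}, $B(u,\R_0)=B(\R_{\Id},\R_0)+B(\R_{\mathcal{W}},\R_0)\in S^2_0\mathbb{R}^4\wedge\id$, so $\pi(B(u,\R_0))=0$. Combining these gives $\pi(Q(\R))=\pi(Q(u))+2\pi(B(u,\R_0))+\pi(Q(\R_0))=Q(u)+\R_0^2$. I expect the block diagonality of $u^\#$ and the vanishing of the block diagonal part of $\R_0^\#$ to be the main obstacle, since they require pushing the definition of $\#$ through the $\mathfrak{so}(3)\oplus\mathfrak{so}(3)$ bracket.

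Finally I would conclude by a forcing argument. Because $\tilde{\mathcal{C}}$ is Ricci flow invariant and $Q(U)\subseteq U$, the cone $\hat{\mathcal{C}}=\tilde{\mathcal{C}}\cap U$ is preserved by the ODE $\frac{d}{dt}v=Q(v)$ inside $U$, so $Q(v)\in T_v\hat{\mathcal{C}}$ for every $v\in\partial\hat{\mathcal{C}}$. Moreover the forcing $\R_0^2$ lies in the cone of nonnegative operators intersected with $U$, which is contained in $\hat{\mathcal{C}}$ (as $\tilde{\mathcal{C}}$ contains all nonnegative operators) and hence in $T_v\hat{\mathcal{C}}$ for every $v\in\hat{\mathcal{C}}$. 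Since the tangent cone is convex, $Q(v)+\R_0^2\in T_v\hat{\mathcal{C}}$ at every boundary point, and Nagumo's subtangent criterion shows the forced ODE $\frac{d}{dt}u=Q(u)+\R_0(t)^2$ keeps $u(t)$ in $\hat{\mathcal{C}}$. As $\R(t)\in\mathcal{C}$ is equivalent to $u(t)=\pi(\R(t))\in\tilde{\mathcal{C}}$, i.e.\ to $u(t)\in\hat{\mathcal{C}}$, this proves that $\mathcal{C}$ is Ricci flow invariant and completes the proof.
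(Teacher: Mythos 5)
Your proof is correct, and its skeleton is the same as the paper's: view $\mathcal{C}$ as $\pi^{-1}(\tilde{\mathcal{C}})$ for the equivariant projection $\pi(\R)=\R_{\Id}+\R_{\mathcal{W}}$, reduce Ricci flow invariance to the identity $\pi(Q(\R))=Q(\pi(\R))+\Lop$ with $\Lop$ a \emph{nonnegative} curvature operator, and conclude because $\Lop\in\tilde{\mathcal{C}}\subset T_{\pi(\R)}\tilde{\mathcal{C}}$ and tangent cones are convex. Where you genuinely diverge is in how $\Lop=Q(\R_0)_{\Id}+Q(\R_0)_{\mathcal{W}}$ is computed. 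The paper quotes the B\"ohm--Wilking formula to get $\Lop=\frac12\ric_0\wedge\ric_0+\frac12\ric_0^2\wedge\id$ and diagonalizes (its displayed eigenvalue $(\lambda_i+\lambda_j)^2/2$ is a typo for $(\lambda_i+\lambda_j)^2/4$, harmless since only the sign matters). You instead use the Singer--Thorpe block picture on $\Lambda^+\oplus\Lambda^-$ and the vanishing of cross-brackets in $\mathfrak{so}(4)=\mathfrak{so}(3)\oplus\mathfrak{so}(3)$ to show that the diagonal blocks of $\R_0^\#$ and the off-diagonal blocks of $u^\#$ vanish, whence $\Lop=\R_0^2$ on the nose; since $\R_0=\ric_0\wedge\id$ in dimension $4$, this agrees with the paper's operator. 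The two steps you single out as the main obstacle do go through (polarization of the quadratic form $\ps{\R^\#\eta}{\eta}$ kills the mixed blocks, and $\tr A=\tr C$ is indeed the first Bianchi identity in this block form, so $\R_0^2\in S^2_B\Lambda^2\mathbb{R}^4$). Your route buys a conceptual explanation of why dimension $4$ is special ($[\Lambda^+,\Lambda^-]=0$) and avoids importing the external formula for $Q(\R_0)$, at the cost of verifying the standard facts that block-diagonality inside $S^2_B\Lambda^2\mathbb{R}^4$ characterizes $\mathbb{R}\Id\oplus\mathcal{W}$ and that the off-diagonal block is $S^2_0\mathbb{R}^4\wedge\id$. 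Your closing Nagumo/forced-ODE argument is an equivalent, slightly more roundabout packaging of the paper's direct check that $Q(\R_{\mathcal{E}})+\Lop\in T_{\R_{\mathcal{E}}}\tilde{\mathcal{C}}$.
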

\begin{rem}\label{sec:rem_wilk_cones}
It is interesting to ask whether these cones can be found by Wilking's
method \cite{2010arXiv1011.3561W}. If one sets $\tilde{\mathcal{C}}$
to be the cone of nonnegative curvature operators, $\mathcal{C}$ is a
Wilking cone built from the $SO(n,\mathbb{C})$ invarainat set
$S=\Lambda^2_+\mathbb{C}^4\cup\Lambda^2_-\mathbb{C}^4$, where
$\Lambda^2_+\mathbb{C}^4$ is the space of selfdual complex two forms
and $\Lambda^2_-\mathbb{C}^4$ is the space of antiselfdual complex two
forms. Similarly, setting $\tilde{\mathcal{C}}$ to be the PIC cone, we
have that $\mathcal{C}$ is also the PIC cone, which is also a Wilking
cone.

However, if $\tilde{\mathcal{C}}$ is the cone of 2-nonnegative
curvature operators, then $\mathcal{C}$ is not a Wilking cone. We
sketch the proof of this fact here, for relevant notations, see
\cite{2010arXiv1011.3561W}.

Assume that $\mathcal{C}=\{\R\ |\
\R_{\Id}+\R_\mathcal{W}\in\tilde{\mathcal{C}}\}$ is a Wilking cone,
that is
\[\mathcal{C}=\{\R\ |\ \forall\omega\in S,\
\ps{\R\omega}{\bar{\omega}}\geq 0\}\]
for some
$S\subset\Lambda^2\mathbb{C}^4\simeq\mathfrak{so}(4,\mathbb{C})$ which is invariant under the natural
action of $SO(4,\mathbb{C})$. One shows that
$\tilde{\mathcal{C}}\subset\mathcal{C}$, which implies that
$S\subset S_1=\{\omega\in\mathfrak{so}(4,\mathbb{C})\ |\
\tr{\omega^2}=0\}$. Moreover, one can show that a Wilking cone
contains $S^2_0\mathbb{R}^4\wedge\id$ if and only if
$S\subset S_2=\Lambda^2_+\mathbb{C}^4\cup\Lambda^2_-\mathbb{C}^4$. This
implies that \[\mathcal{C}\supset \{\R\ |\ \forall\omega\in S_1\cap S_2,\
\ps{\R\omega}{\bar{\omega}}\geq 0\}\]
which is the cone of operators
$\R$ such that the restrictions of $\R_{\Id}+\R_\mathcal{W}$ to
$\Lambda^2_+\mathbb{R}^4$ and $\Lambda^2_-\mathbb{R}^4$ are (separately)
2-nonnegative, which is a weaker condition than asking
$\R_{\Id}+\R_\mathcal{W}$ to be 2-nonnegative on the full $\Lambda^2\mathbb{R}^4$.
\end{rem}

We now prove the theorem.
\begin{proof}
  Let $\R=\R_{\Id}+\R_0+\R_\mathcal{W}\in\partial\mathcal{C}$. Note
  that using decomposition \eqref{eq:decomp}, we have :
  \[\frac{\scal}{12}+W=\R_{\Id}+\R_\mathcal{W}\in\tilde{\mathcal{C}}.\]
  To simplify notations, we define $\R_\mathcal{E}=\R_{\Id}+\R_\mathcal{W}$
  We want to show that :
  \[Q(\R)\in T_{\R}{\mathcal{C}}=\left\{\Lop|\Lop_\mathcal{E}\in T_{\R_\mathcal{E}}\tilde{\mathcal{C}} \right\}.\]
  Since
  $\R_\mathcal{E}\in\partial\mathcal{\tilde{C}}$, we
  have :
  \begin{equation}
  Q(\R_\mathcal{E})=Q(\R_{\Id}+\R_\mathcal{W})=Q(\R_{\Id})+Q(\R_\mathcal{W})\in T_{\mathcal{E}}\tilde{\mathcal{C}}.\label{eq:1}
  \end{equation}
  We have :
  \begin{align*}
    Q(\R)=\ &Q(\R_{\Id})+Q(\R_0)+Q(\R_\mathcal{W})\\
    &+2Q(\R_{\Id},\R_0)+2Q(\R_{\Id},\R_\mathcal{W})+2Q(\R_0,\R_\mathcal{W}).
  \end{align*}
  Thanks to Lemma \ref{lem_QandIrreds}, all the terms of the second line belong to
  $S^2_0\mathbb{R}^n\wedge\id\subset\mathcal{C}$, and $Q(\R_{\Id})\in\mathbb{R}\Id$, $Q(\R_\mathcal{W})\in\mathcal{W}$. So :
  \[Q(\R)_{\Id}=Q(\R_{\Id})+Q(\R_0)_{\Id}\]
  and
  \[Q(\R)_\mathcal{W}=Q(\R_0)_\mathcal{W}+Q(\R_\mathcal{W}).\]
  Thus :
  \begin{align*}
    Q(\R)_{\Id}+Q(\R)_\mathcal{W}=\ & Q(\R_{\Id})
  +Q(\R_\mathcal{W})\\
  &+ Q(\R_0)_{\Id}+Q(\R_0)_\mathcal{W}.
  \end{align*}
  Thanks to \eqref{eq:1}, the first line is
  in $T_{\R_\mathcal{E}}\tilde{\mathcal{C}}$. We will prove
  that the second line is in fact $\tilde{\mathcal{C}}\subset
  T_{\R_\mathcal{E}}\tilde{\mathcal{C}}$. This will show that
  $\mathcal{C}$ is Ricci flow invariant.

  We now write down the second line $\Lop=Q(\R_0)_{\Id}+Q(\R_0)_\mathcal{W}$
  explicitly in term of the traceless Ricci tensor $\ric_0$ of
  $\R$. For this, we use the formula in Lemma 2.2 in \cite{MR2415394},
  to which we subtract the traceless Ricci part, and then specialize
  to $n=4$  :
  \begin{align*}
    \Lop=Q(\R_0)_{\Id}+Q(\R_0)_\mathcal{W}=
    &\frac{1}{2}\ric_0\wedge\ric_0+\frac{1}{2}\ric_0^2\wedge\id.
  \end{align*}
  If $(e_i)$ is a basis of eigenvectors of $\ric_0$ with eigenvalues
  $\lambda_i$, then the $(e_i\wedge e_j)$ form a basis of eigenvectors
  of $\Lop$ with eigenvalues :
  \begin{align*}
  \mu_{ij}=
  \frac{\lambda_i\lambda_j}{2}+\frac{\lambda_i^2+\lambda_j^2}{4}=\frac{(\lambda_i+\lambda_j)^2}{2}\geq
  0.
  \end{align*}
  This shows that $\Lop$ is nonnegative, and thus is in $\tilde{\mathcal{C}}$.
\end{proof}
We now prove Theorem \ref{thm_cone_with_cfsf}.
\begin{proof}
  Let $\mathcal{C}$ be a Ricci flow invariant cone which contains
  $S^2_0\mathbb{R}^n\wedge\id$ and all nonnegative curvature
  operators, with $n\geq 5$.

  Consider $\mathbb{S}^{n-2}\times\mathbb{H}^2$ with its product metric
  where the first factor has constant curvature $1$ and the second factor has
  constant curvature $-1$. It is conformally flat and has positive
  scalar curvature. Therefore its curvature operator $\R$ lies in the
  interior of $\mathcal{C}$.

  Now since $\mathcal{C}$ contains all non-negative curvature
  operators, it contains the curvature operator $\bar{\R}$ of
  $\mathbb{R}^{n-2}\times\mathbb{S}^2$ with its product metric. Thus,
  for any $a>0$, $b>0$, $a\R+b\bar{\R}$ is in the interior of
  $\mathcal{C}$.

  Finally, $\R+(n-3)\bar{\R}$ is the curvature operator of the
  product Einstein metric on $\mathbb{S}^{n-2}\times\mathbb{S}^2$,
  which is a symmetric space. Corollary
  \ref{cor_coneWithEinsteinSymInInterior} then implies that $\mathcal{C}=\mathcal{C}_{\scal}$.
\end{proof}

\appendix
\section{Generalities about cones invariant under the action of a Lie group}
\label{sec:gener-about-cones}
We prove here some elementary facts about convex cones in a vector
space which are invariant under the action of a Lie group. The example
we have in mind is of course a curvature cone in
$S^2_B\Lambda^2\mathbb{R}^n$. These results are probably known in some circles but we describe them here for the sake of completeness.

For this section, we will use the following notations: $G$ is a compact Lie group, $E$ is a Euclidean vector space with inner product $\ps{\ }{\ }$ on which $G$ acts by linear isometries, and $\mathcal{C}\subset E $ is a closed convex cone which is invariant under the action of $G$. The action of an element $g\in G$ on $E$ will be denoted by $x\in E\mapsto g.x\in E$.

The tangent cone to $\mathcal{C}$ at a point $x\in\mathcal{C}$ is
defined as follows :
\[T_x\mathcal{C}=\{v\in E\ |\ \exists t>0\ x+tv\in\mathcal{C}\}.\]
This a closed convex cone in $E$, however, it is not $G$ invariant in general.

\begin{prop}\label{lem_maxVectorSpace}
  There exist a unique vector subspace $\mathcal{V}$ of $E$
  which is included in $\mathcal{C}$ such that:
  \begin{itemize}
  \item any vector subspace satisfying $\mathcal{V} '\subset\mathcal{C}$
    satisfies $\mathcal{V} '\subset \mathcal{V}$,
  \item $\mathcal{V}$ is $G$-invariant.
  \end{itemize}
\end{prop}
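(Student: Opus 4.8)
The plan is to produce $\mathcal{V}$ explicitly as the lineality space of the cone, namely $\mathcal{V} = \mathcal{C} \cap (-\mathcal{C})$, and then verify that this set has all the required properties. The guiding principle is that any vector subspace contained in $\mathcal{C}$ must be symmetric about the origin and hence contained in $\mathcal{C} \cap (-\mathcal{C})$, so the candidate for the maximal subspace is forced; the rest is checking.

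First I would check that $\mathcal{V} = \mathcal{C} \cap (-\mathcal{C})$ is itself a vector subspace. As an intersection of two closed convex cones it is again a closed convex cone, so in particular it is stable under addition and under multiplication by nonnegative scalars. By construction it is stable under $x \mapsto -x$, and combining this with stability under nonnegative scaling gives stability under multiplication by arbitrary scalars. Together with additivity this makes $\mathcal{V}$ a linear subspace, and as an intersection of closed sets it is closed.

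Next I would verify the two listed properties. For the maximality property: if $\mathcal{V}'$ is any vector subspace with $\mathcal{V}' \subset \mathcal{C}$, then for $x \in \mathcal{V}'$ we also have $-x \in \mathcal{V}' \subset \mathcal{C}$, so $x \in \mathcal{C} \cap (-\mathcal{C}) = \mathcal{V}$; hence $\mathcal{V}' \subset \mathcal{V}$. For $G$-invariance: since $G$ acts by linear maps and $\mathcal{C}$ is $G$-invariant, the reflected cone satisfies $g\cdot(-\mathcal{C}) = -(g\cdot\mathcal{C}) = -\mathcal{C}$, so $-\mathcal{C}$ is $G$-invariant as well; hence $\mathcal{V}$, being the intersection of two $G$-invariant sets, is $G$-invariant. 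Uniqueness then follows from the maximality property alone: if $\mathcal{V}_1, \mathcal{V}_2$ are two subspaces in $\mathcal{C}$ each satisfying the first bullet, applying the property of $\mathcal{V}_1$ to $\mathcal{V}_2$ gives $\mathcal{V}_2 \subset \mathcal{V}_1$ and symmetrically $\mathcal{V}_1 \subset \mathcal{V}_2$, so they coincide.

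I do not expect a genuine obstacle here, since the statement is essentially an elementary fact of convex geometry. The only point where one must be slightly careful is the lemma that a convex cone which is symmetric under negation is automatically a linear subspace: here one has to use both hypotheses, the cone property and convexity, to get closure under addition (convexity gives $\tfrac{1}{2}(x+y)\in\mathcal{V}$ and the cone property then upgrades this to $x+y\in\mathcal{V}$). Everything else, including the interaction with the $G$-action, is immediate from linearity of the action.
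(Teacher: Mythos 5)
Your proof is correct, but it takes a different route from the paper's. The paper never writes down a formula for $\mathcal{V}$: it observes that if $\mathcal{V}_1,\mathcal{V}_2\subset\mathcal{C}$ are subspaces then $\mathcal{V}_1+\mathcal{V}_2\subset\mathcal{C}$ (by convexity and the cone property), deduces from this that a largest such subspace exists, and then obtains $G$-invariance as a consequence of maximality (for $g\in G$ and $v\in\mathcal{V}$, the line $g.\mathbb{R}v$ lies in $\mathcal{C}$, hence in $\mathcal{V}$). You instead exhibit $\mathcal{V}$ explicitly as the lineality space $\mathcal{C}\cap(-\mathcal{C})$ and check all properties directly from that formula. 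Your version buys two things: it sidesteps the slightly elided step in the paper where ``there is a biggest subspace'' is asserted (which strictly speaking needs finite-dimensionality or a directedness argument on the family of subspaces contained in $\mathcal{C}$), and it makes the $G$-invariance immediate, since $-\mathcal{C}$ is $G$-invariant whenever $\mathcal{C}$ is and $G$ acts linearly. The paper's version is shorter and emphasizes the maximality property, which is the one actually used later, but both arguments are complete and the objects they produce coincide: any maximal subspace in $\mathcal{C}$ must equal $\mathcal{C}\cap(-\mathcal{C})$ by your maximality argument. Your care in deriving closure under addition from convexity plus the cone property is exactly right and is the same mechanism the paper invokes for $\mathcal{V}_1\oplus\mathcal{V}_2\subset\mathcal{C}$.
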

\begin{proof}
  Consider two vector spaces $\mathcal{V}_1,\mathcal{V}_2\subset\mathcal{C}$. Since
  $\mathcal{C}$ is a convex cone, $\mathcal{V}_1\oplus
  \mathcal{V}_2\subset\mathcal{C}$. This shows that there is a biggest subspace
  $\mathcal{V}\subset\mathcal{C}$.

  We now show that $\mathcal{V}$ is $G$ invariant. Let $g\in
  G$ and $v\in \mathcal{V}$. Let $L=\mathbb{R}v$, then
  $g.L\subset \mathcal{C}$. Thus, since $V$ is the biggest vector space
  in $\mathcal{C}$, $g.L\subset \mathcal{V}$, and $gv\in \mathcal{V}$.
\end{proof}
In particular, $\mathcal{V}$ is a subrepresentation of $E$. In
particular, if $E$ splits as the sum of irreducibles $\bigoplus_{i\in
  I} E_i$, then $\mathcal{V}=\bigoplus_{i\in
  J} E_i$ for some subset $J$ of $I$.

\begin{prop}\label{prop_VboundaryC}
  $\mathcal{V}\subset\partial\mathcal{C}$, except if $\mathcal{C}=E$.
\end{prop}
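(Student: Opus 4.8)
The plan is to prove the contrapositive: I will show that if $\mathcal{V}$ meets the interior of $\mathcal{C}$, then $\mathcal{C}=E$. Since $\mathcal{V}\subset\mathcal{C}$ and every point of $\mathcal{C}$ is either interior or boundary, ruling out interior points of $\mathcal{V}$ (whenever $\mathcal{C}\neq E$) is exactly the assertion $\mathcal{V}\subset\partial\mathcal{C}$.

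So suppose $v\in\mathcal{V}$ lies in the interior of $\mathcal{C}$. The feature I would exploit is that $\mathcal{V}$ is a vector subspace, so $-v\in\mathcal{V}\subset\mathcal{C}$ as well. Choose $\eps>0$ so that every $y$ with $\|y-v\|<\eps$ lies in $\mathcal{C}$. Then for each $w$ with $\|w\|<\eps$ we have $v+w\in\mathcal{C}$, and convexity gives
\[\tfrac{1}{2}(v+w)+\tfrac{1}{2}(-v)=\tfrac{1}{2}w\in\mathcal{C}.\]
Hence every vector of norm less than $\eps/2$ lies in $\mathcal{C}$; that is, the origin is an interior point of $\mathcal{C}$.

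Finally I would invoke the cone property to conclude $\mathcal{C}=E$: for any nonzero $x\in E$, the vector $\frac{\eps}{4\|x\|}x$ has norm less than $\eps/2$ and so lies in $\mathcal{C}$, whence $x\in\mathcal{C}$ because $\mathcal{C}$ is stable under multiplication by positive reals. I do not expect a genuine obstacle here: this is the standard lineality-space argument, and it uses neither the $G$-invariance of $\mathcal{C}$ nor that of $\mathcal{V}$, only convexity, the cone hypothesis, and the fact that $\mathcal{V}$ is stable under $v\mapsto -v$. The one place where care is needed is the last step, where the cone hypothesis (rather than mere convexity) is what turns ``the origin is interior'' into ``$\mathcal{C}=E$''.
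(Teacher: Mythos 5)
Your argument is correct and is essentially the paper's own proof: both take a point of $\mathcal{V}$ in the interior of $\mathcal{C}$, use $-v\in\mathcal{V}\subset\mathcal{C}$ together with convexity to translate a neighborhood of $v$ into a neighborhood of the origin inside $\mathcal{C}$, and then use the cone property to conclude $\mathcal{C}=E$. You simply spell out the midpoint and rescaling steps that the paper leaves implicit.
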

\begin{proof}
  Assume there is some $x$ which is in $\mathcal{V}$ and in the interior of
  $\mathcal{C}$. There is a neighborhood $O$ of $x$ which is included
  in $\mathcal{C}$. Since $-x$ is in $\mathcal{V}$ and $\mathcal{C}$ is a
  convex cone, $O-x$ is a neighborhood of $0$ contained in
  $\mathcal{C}$. Since $\mathcal{C}$ is a convex cone, we then have that $\mathcal{C}=E$.
\end{proof}
\begin{prop}
  If $x\in\partial\mathcal{C}$, then $\mathcal{V}\subset T_x\mathcal{C}$.
\end{prop}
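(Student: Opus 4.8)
The plan is to prove the inclusion directly from the definition of the tangent cone, and the one structural fact I would exploit is that a convex cone is automatically stable under addition. First I would observe that, since $\mathcal{C}$ is closed, its boundary is contained in it, so the hypothesis $x\in\partial\mathcal{C}$ already gives $x\in\mathcal{C}$. Next I would record the elementary remark that if $a,b\in\mathcal{C}$ then, by convexity, $\tfrac12(a+b)\in\mathcal{C}$, and then by the cone property $a+b=2\cdot\tfrac12(a+b)\in\mathcal{C}$; hence $\mathcal{C}$ is closed under addition.

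With this in hand, the conclusion is immediate. Given any $v\in\mathcal{V}$, the inclusion $\mathcal{V}\subset\mathcal{C}$ gives $v\in\mathcal{C}$, and combining $x\in\mathcal{C}$, $v\in\mathcal{C}$ with the additive stability above yields $x+v\in\mathcal{C}$. Taking $t=1$ in the definition $T_x\mathcal{C}=\{w\in E\mid \exists t>0,\ x+tw\in\mathcal{C}\}$, this shows $v\in T_x\mathcal{C}$. Since $v\in\mathcal{V}$ was arbitrary, we obtain $\mathcal{V}\subset T_x\mathcal{C}$.

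I do not expect any real obstacle here. In particular, the argument uses neither the $G$-invariance of $\mathcal{V}$ nor its maximality, and it uses the hypothesis $x\in\partial\mathcal{C}$ only through the inclusion $x\in\mathcal{C}$; the single point requiring a moment of care is the passage from convexity together with the cone property to closure under addition, which is exactly the one-line computation isolated above.
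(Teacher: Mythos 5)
Your proof is correct and follows essentially the same route as the paper, which simply invokes the general fact that $\mathcal{C}\subset T_x\mathcal{C}$ for any $x\in\mathcal{C}$; your argument just makes that fact explicit via the closure of a convex cone under addition. Nothing is missing.
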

\begin{proof}
  This just follows from the general fact that $\mathcal{C}\subset T_x\mathcal{C}$.
\end{proof}
\begin{prop}\label{prop_boudaryPreservedByAddingV}
  If $x\in\partial\mathcal{C}$ and $v\in\mathcal{V}$, then $x+v\in\partial\mathcal{C}$.
\end{prop}
\begin{proof}
  If $x+v$ is in the interior of $\mathcal{C}$, one can find an
  open set $O\subset\mathcal{C}$ containing $x+v$, and $O-v$ is
  a neighborhood of $x$ contained in $\mathcal{C}$, a contradiction.
\end{proof}

\begin{prop}\label{prop_coneInIrredRep}
Assume that $E$ is a direct sum of non trivial
representation of $G$, then $\mathcal{C}$ is a subrepresentation of $E$.
\end{prop}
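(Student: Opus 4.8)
The plan is to show that $\mathcal{C}$ coincides with its own linear span, which — since $\mathcal{C}$ is already $G$-invariant — is exactly the assertion that $\mathcal{C}$ is a subrepresentation (i.e.\ a $G$-invariant subspace). The representation-theoretic hypothesis enters through a single clean consequence: because $E$ is a sum of nontrivial irreducibles it contains no nonzero $G$-fixed vector, so $E^G=\{0\}$, and the same holds for every subrepresentation. The convex-geometric input is the averaging (center of mass) trick together with the existence of a strictly separating functional for a proper cone.

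First I would reduce to the full-dimensional case. Let $W=\mathcal{C}-\mathcal{C}$ be the linear span of $\mathcal{C}$; this is a $G$-invariant subspace, hence a subrepresentation, so it too is a sum of nontrivial irreducibles and satisfies $W^G=\{0\}$. Working inside the Euclidean space $W$, the cone $\mathcal{C}$ is closed, convex, $G$-invariant and has nonempty interior. It then suffices to prove $\mathcal{C}=W$, for a convex cone equal to its span is a vector subspace.

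Next I would record the averaging fact. Let $\mu$ be the normalized Haar measure on the compact group $G$. For any $y\in W$ the vector $\bar y=\int_G g.y\,d\mu(g)$ is $G$-invariant, hence lies in $W^G=\{0\}$, so $\bar y=0$. Now argue by contradiction: suppose $\mathcal{C}\neq W$. Then the dual cone $\mathcal{C}^{*}=\{y\in W\mid \ps{y}{z}\geq 0\ \forall z\in\mathcal{C}\}$ is not reduced to $\{0\}$, since by the bipolar theorem $\mathcal{C}^{*}=\{0\}$ would force $\mathcal{C}=\mathcal{C}^{**}=W$. Pick $y\in\mathcal{C}^{*}\setminus\{0\}$ and an interior point $v$ of $\mathcal{C}$ in $W$. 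One checks that $\ps{v}{x}>0$ for every nonzero $x\in\mathcal{C}^{*}$: indeed $\ps{v}{x}\geq 0$, and if $\ps{v}{x}=0$ then $v\pm\eps w\in\mathcal{C}$ for small $\eps>0$ and every unit $w\in W$ gives $\pm\eps\ps{w}{x}\geq 0$, so $\ps{w}{x}=0$ for all $w\in W$, whence $x=0$. Since $G$ acts by isometries, each $g.y$ is a nonzero element of the $G$-invariant cone $\mathcal{C}^{*}$, so $\ps{v}{g.y}>0$ for all $g\in G$; integrating the continuous positive function $g\mapsto\ps{v}{g.y}$ against the probability measure $\mu$ yields $\ps{v}{\bar y}>0$. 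But $\bar y=0$, a contradiction. Hence $\mathcal{C}=W$.

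The main obstacle is precisely this bridge between the algebra and the geometry. Knowing $W^G=\{0\}$ only tells us that orbit averages vanish, which by itself says nothing about whether $\mathcal{C}$ is a subspace; the leverage comes from combining it with the fact that a proper closed convex cone admits a functional that is strictly positive on the nonzero part of its dual cone. Producing such a functional — here, an interior point of $\mathcal{C}$ — and verifying it stays strictly positive along the entire orbit $G.y$ is the delicate step, after which the vanishing of the average closes the contradiction. I would also note that this scheme immediately covers the uses made earlier: an $O(n)$-invariant cone lying inside a single irreducible factor such as $S^2_0\mathbb{R}^n\wedge\id$ or $\mathcal{W}$ is forced to be either $\{0\}$ or the whole factor.
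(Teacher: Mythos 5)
Your proof is correct and follows essentially the same route as the paper: both pass to the dual cone, average an element of it over the Haar measure, and derive a contradiction from the pairing of that average with a suitable element of $\mathcal{C}$ being strictly positive while the average must be a $G$-fixed vector (which the hypothesis forces to be zero). The only cosmetic differences are that you work inside the span $W=\mathcal{C}-\mathcal{C}$ and invoke the bipolar theorem where the paper argues directly in $E$.
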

\begin{proof}
  We only need to show that $\mathcal{C}$ is a vector subspace of $E$. Since
  $\mathcal{C}$ is a convex cone, it is stable under linear
  combination with nonnegative coefficients. Therefore we only need to
  show that $\mathcal{C}$ is stable under $x\mapsto -x$.

  We argue by contradiction. Assume that $\mathcal{C}$ is not stable
  under $x\mapsto -x$. Consider the dual cone
  $\mathcal{C}^*=\{v\in E|\forall x\in\mathcal{C},\ps{v}{x}\geq
  0\}$. Then $\mathcal{C}^*$ is also a convex $G$-invariant cone, it
  is easy to see that $\mathcal{C}^*$ is also not a vector subspace of
  $E$. This implies that there is some $v$ in $\mathcal{C}^*$ such
  that $-v$ does not belong to $\mathcal{C}^*$. In particular, there is
  some $x_0\in\mathcal{C}$ such that $\ps{v}{x_0}>0$.

  Consider now:
\[\tilde{v}=\int_G g.v dg\]
where $dg$ is a Haar measure on $G$. Then $\tilde{v}$ is not zero because:
\[\ps{\tilde{v}}{x_0}=\int_G\ps{g.v}{x_0}dg>0\]
since $g\mapsto\ps{g.v}{x_0}=\ps{v}{g^{-1}x_0}$ is a continuous nonnegative function which is strictly positive at the neutral element of $G$. Moreover, for any $g'\in G$:
\[g'.\tilde{v}=\int_G g.(g'.v)dg=\int_G(gg').vdg=\int_G g.v dg=\tilde{v}.\]
This shows that $\mathbb{R}\tilde{v}\subset E$ is an irreducible
subrepresentation of $E$ which is trivial, a contradiction.
\end{proof}

\bibliographystyle{alpha}
\bibliography{noncol3RF}
\end{document}